\newtheorem{lemma}{Lemma}
\newtheorem{theorem}[lemma]{Theorem}
\newtheorem{claim}[lemma]{Claim}
\newtheorem{corollary}[lemma]{Corollary}
\renewcommand{\ALG@name}{Procedure}
\newcommand{\BB}{\mathcal{B}}
\newcommand{\ZZ}{\mathcal{Z}}
\newcommand{\cl}{\text{cl}}
\newcommand{\MM}{\mathbb{M}}
\newcommand{\HH}[1]{\mathcal{H}\!\left(#1\right)}
\newcommand{\e}{\text{e}}
\newcommand{\defeq}{:=}
\newcommand{\ignore}[1]{}
\date{\today}
\newcommand\AlgInput[1]{%
\Statex\hspace*{-\leftmargin}\textbf{Input: } #1%
}
\newcommand\AlgOutput[1]{%
\Statex\hspace*{-\leftmargin}\textbf{Output: } #1%
}
\newcommand\AlgLine{%
	\vspace*{-.7\baselineskip}\Statex\hspace*{\dimexpr-\leftmargin\relax}\rule{\textwidth}{0.4pt}%
%	\vspace*{-.7\baselineskip}\Statex\hspace*{\dimexpr-\algorithmicindent-2pt\relax}\rule{\textwidth}{0.4pt}%
}
\title{On the number of matroids compared to the number of sparse paving matroids}
\author{Rudi Pendavingh}
\author{Jorn van der Pol}
\address{Eindhoven University of Technology, Eindhoven, the Netherlands}
\email{\{R.A.Pendavingh,J.G.v.d.Pol\}@tue.nl}
\thanks{This research has been supported by the Netherlands Organisation for Scientific Research (NWO) grant 613.001.211.}
\begin{document}

\begin{abstract} It has been conjectured that sparse paving matroids will eventually predominate in any asymptotic enumeration of matroids, i.e. that $\lim_{n\rightarrow\infty} s_n/m_n = 1$, where $m_n$ denotes the number of matroids on $n$ elements, and $s_n$ the number of sparse paving matroids.
%Combining a lower bound of Knuth on $s_n$ and an upper bound of Bansal, Pendavingh and van der Pol on $m_n$, we have
%$$ \log s_n\leq \log m_n\leq 2(1+o(1)) \log s_n\text{ as }n\rightarrow \infty.$$
In this paper, we show that 
$$\lim_{n\rightarrow \infty}\frac{\log s_n}{\log m_n}=1.$$
We prove this by arguing that each matroid on $n$ elements has a faithful description consisting of a stable set of a Johnson graph together with a (by comparison) vanishing amount of other information, and using that stable sets in these Johnson graphs correspond one-to-one to sparse paving matroids on $n$ elements.

As a consequence of our result, we find that for some $\beta > 0$, asymptotically almost all matroids on $n$ elements have rank in the range $n/2 \pm \beta\sqrt{n}$.\end{abstract}

\maketitle

\section{Introduction}

After matroids up to 8 elements were enumerated by Blackburn, Crapo, and Higgs~\cite{BlackburnCrapoHiggs1973}, it was noted that a substantial fraction of the matroids were paving matroids, that is, matroids $M$ whose circuits are all of cardinality at least $r(M)$. Crapo and Rota speculated that perhaps `paving matroids will predominate in any enumeration of matroids'~\cite[p.\ 3.17]{CrapoRotaBook}. In~\cite[Conjecture 1.6]{MayhewNewmanWelshWhittle2011} Mayhew, Newman, Welsh and Whittle make the more precise conjecture in that the asymptotic fraction of matroids on $n$ elements that are paving tends to 1 as $n$ tends to infinity. Their conjecture is equivalent to the seemingly stronger statement that
\begin{equation}\label{conj:all_sparse}\lim_{n\rightarrow\infty} s_n/m_n = 1.\end{equation}
Here $m_n$ denotes the number of matroids on a fixed ground set of $n$ elements, and $s_n$ is the number of sparse paving matroids (a matroid is sparse paving if both it and its dual are paving).

Sparse paving matroids seem benign objects compared to matroids in general. For example, it is straightforward that a sparse paving matroid on sufficiently many elements is highly connected. The predominance of sparse paving matroids as in \eqref{conj:all_sparse}  would thus immediately imply the predominance of $k$-connected matroids, which is conjectured but remains an open problem. Similarly, it is relatively straightforward that asymptotically all sparse paving matroids have a fixed uniform matroid $U_{a,b}$ as a minor, but the analogous statement for general matroids is open. Further examples along these lines are easy to find, whence the interest in conjecture \eqref{conj:all_sparse}.

Combining the lower bound of Graham and Sloane~\cite[Theorem 1]{GrahamSloane1980} (as pointed out in~\cite{MayhewWelsh2013}) on $s_n$ and the upper bound of Bansal, Pendavingh and van~der~Pol~\cite{BansalPendavinghVanderpol2014} on $m_n$, we have
\begin{equation}\label{eq:upper_lower}
	\frac{1}{n}\binom{n}{\lfloor n/2\rfloor}\leq\log s_n\leq \log m_n\leq \frac{2+o(1)}{n}\binom{n}{\lfloor n/2\rfloor} \qquad\text{ as }n\rightarrow\infty.
\end{equation}
These bounds do not suffice to prove \eqref{conj:all_sparse}, but merely imply
$$ \log s_n\leq \log m_n\leq (2+o(1)) \log s_n\text{ as }n\rightarrow \infty,$$
or equivalently, that $s_n\leq m_n\leq s_n^{2+o(1)}$. 

The sparse paving matroids showed a somewhat less benign side when we attempted to narrow the gap between the upper and the lower bound in \eqref{eq:upper_lower}. Being unable to improve either bound, we devised a way to directly compare the number of matroids to the number of sparse paving matroids. This enabled us to prove the main result of this paper, that 
$$\log m_n\leq (1+o(1)) \log s_n\text{ as }n\rightarrow \infty,$$
or equivalently, $m_n=s_n^{1+o(1)}$.
Our method is closely related to the one used in \cite{BansalPendavinghVanderpol2014} to prove the upper bound on $m_n$, which itself is an adaptation of a method to bound $s_n$.

%We will briefly outline our method, and describe how it differs from earlier work. We consider the set of bases of a matroid $M$ of rank $r$ as a subset of all the $r$-subsets of $E$, i.e.\ a set of vertices in the Johnson graph $J(E,r)$. 

We will briefly outline the method and describe how it differs from earlier work. Key to our method is an algorithm for producing a compressed description of any given matroid $M$ on $E$ of rank $r$. The compression algorithm considers the set of bases of $M$ as a subset of all the $r$-subsets of $E$, which are the vertices the Johnson graph $J(E,r)$. We obtain a compact description of the matroid by starting from the full set of vertices $A$ of the Johnson graph $G=J(E,r)$ and iteratively taking away neighborhoods of vertices from $A$ while describing the set  of bases among these neighbourhoods. As long as there are vertices of high degree in $G[A]$ to pick, the rate at which we need to add information into our matroid description compares favourably to the decrease in the size of $A$.  We argued that while $A$ is large there will be such vertices of high degree, and by the time~$A$ contains no more than a certain~$\alpha$-fraction of the vertices, the total amount of information stored so far will still be relatively modest. In our previous paper, we completed the description of the matroid by adding  $\alpha \binom{n}{r}$ bits to describe the subset of bases among the remaining vertices of $A$, and this is what ultimately dominated the length of the matroid description we obtained. Hence, the cost of describing the bases among the final set $A$ was the bottleneck for producing a tighter upper bound on $m_n$.

Previously, the presence of a vertex of high degree in~$G[A]$ was necessary to show that the set of bases in its neighbourhood can be described using a relatively small amount of information.
%Therefore the method will stop compressing the matroid as soon as the maximum degree in $G[A]$ becomes too small.
In the present paper, we show that in the final stage a small maximum degree is advantageous as well. In particular, if~$G[A]$ does not contain vertices of high degree, then most of the neighbours of~$X \in A$ lie outside $A$, so that for most of these neighbours it is known whether they are a basis of the matroid or not from the matroid description so far.
%In the present paper, we use the fact that~$G[A]$ does not have vertices of high degree in the final stage to our advantage. Equivalently, most neighbours of a vertex $X\in A$ then lie outside~$A$, so that for most of these neighbours it is known whether they are basis of the matroid or not from the matroid description stored so far.
Exploiting this information and matroid structure, we find that the $\alpha \binom{n}{r}$ bits we used before can be replaced by a certain stable set $T\subseteq A$ to obtain a faithful description of the matroid. Since our matroid description now consists of a stable set in the Johnson graph together with some contained amount of further information, it becomes possible to compare the number of matroids directly to the number of stable sets in the Johnson graph. As stable sets in $J(E,r)$ are in 1-1 correspondence to sparse paving matroids, this implies our main result.

A further result in this paper is that there exists~$\beta>0$ such that asymptotically all matroids on $n$ elements have a rank between~$n/2-\beta\sqrt{n}$ and $n/2+\beta\sqrt{n}$. This is related to a second conjecture from~\cite[Conjecture 1.10]{MayhewNewmanWelshWhittle2011}, that asymptotically all matroids on $n$ elements have a rank between~$(n-1)/2$ and~$(n+1)/2$.

After giving preliminaries on graphs and matroids in Section 2, we present both results in Section 3. Finally, we discuss several remaining open problems related to our main results in Section 4.

\section{Preliminaries}

\subsection{Graphs and stable sets} We only consider loopless,  undirected graphs in this paper. 
If $G$ is any graph, then we write $\Delta(G)$ for the maximum degree in $G$. Further, for any $A \subseteq V(G)$, we write $G[A]$ for the subgraph of $G$ induced by the vertices in $A$. A set of vertices $A\subseteq V(G)$ is {\em stable} if $G[A]$ spans no edges. We write $i(G)$ for the number of stable sets in $G$.

In~\cite{BansalPendavinghVanderpol2014}, Nikhil Bansal and the current authors proved the following result on the number of stable sets in regular graph.

\begin{theorem}\label{thm:stablesets}
	Let $G$ be a $d$-regular graph on $N$ vertices with smallest eigenvalue $-\lambda$, with $d > 0$. Then $i(G) \le \sum_{s =0}^{\lceil \sigma N\rceil} \binom{N}{s} 2^{\alpha N}$, where $\alpha = \frac{\lambda}{d+\lambda}$ and $\sigma = \frac{\ln(d+1)}{d+\lambda}$.
\end{theorem}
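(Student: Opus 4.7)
The approach will be an encoding / container-style argument. For every stable set $I$, I will produce a pair $(S,T)$ with $S \subseteq V(G)$ of size at most $\lceil \sigma N \rceil$ and $T$ a subset of some set $A = A(S)$ of size at most $\alpha N$, in such a way that $A(S)$ depends only on $S$ and $I = S \cup T$. The number of stable sets is then bounded by summing over the possible seeds: for each seed $S$ of size $s$ there are $2^{|A(S)|} \le 2^{\alpha N}$ choices of $T$, so $i(G) \le \sum_{s \le \lceil \sigma N \rceil}\binom{N}{s}2^{\alpha N}$.

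The construction is greedy. Fix a linear order on $V(G)$ to break ties. Maintain sets $S \subseteq V$ (initially empty) and $A \subseteq V$ (initially $V$), and iterate: let $v$ be the vertex of maximum degree in $G[A]$; if $v \in I$, put $v$ in $S$ and delete $N_G[v]$ from $A$; otherwise delete only $v$ from $A$. Stop when $|A| \le \alpha N$ and set $T = I \cap A$. Two observations make this work: first, the choice of $v$ is a function of $A$, and the update depends only on whether $v$ eventually lies in $S$, so running the same procedure with access only to $S$ reproduces the same $A$; second, because $I$ is stable, $N_G[v] \cap I = \{v\}$ whenever $v \in I$, so no vertex of $I$ other than the one just added to $S$ is ever removed from $A$, and $I \setminus S \subseteq A$.

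The main technical step is the bound $|S| \le \lceil \sigma N \rceil$. The key ingredient is a spectral lower bound on the number of edges of $G[A]$: decomposing $\mathbf{1}_A = (|A|/N)\mathbf{1} + \mathbf{1}_A^{\perp}$ with respect to the top eigenvector of the adjacency matrix and using the smallest-eigenvalue hypothesis $-\lambda$ gives $2 e(G[A]) \ge (d|A|^2 - \lambda |A|(N-|A|))/N$. Consequently, when $|A| > \alpha N = \lambda N/(d+\lambda)$, the maximum degree of $G[A]$ is at least $((d+\lambda)|A| - \lambda N)/N > 0$; plugging this into the recurrence for $|A|$ driven by the ``in $I$'' steps produces a geometric-style decay that caps the number of such steps at roughly $\sigma N$.

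The main obstacle I anticipate is obtaining the sharp constant $\ln(d+1)$ in $\sigma$ rather than the weaker $\ln N$ one would get from a naive ``maximum degree is at least average degree'' estimate applied to the excess $|A|-\alpha N$. Closing this gap will likely require combining the ``in $I$'' and the ``discard'' steps through a carefully chosen potential function (for instance, $\Phi(A)=\sum_{v\in A}\deg_{G[A]}(v)$, which captures local density around the seed), or refining the stopping rule so that the telescoped logarithmic decrease is $\ln(d+1)$ rather than $\ln|A_0|$. Once this calibration is carried out, the bounds $|S| \le \lceil \sigma N \rceil$ and $|A| \le \alpha N$ hold by construction and the theorem follows from the counting inequality above.
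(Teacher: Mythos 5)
Your encoding scheme, the reconstruction argument (a deleted neighbour of a selected $v\in I$ cannot itself lie in $I$, so $I\setminus S\subseteq A$ and the rerun from $S$ alone recovers $A$), and the spectral estimate $2e(G[A])\ge\bigl(d|A|^2-\lambda|A|(N-|A|)\bigr)/N$ are all correct, and this is exactly the Kleitman--Winston route taken in \cite{BansalPendavinghVanderpol2014}, from which the present paper quotes the theorem without reproving it. However, the one step you explicitly leave open --- the bound $|S|\le\lceil\sigma N\rceil$ with the constant $\ln(d+1)$ --- is the entire quantitative content of the statement, and the remedies you gesture at (a potential function such as $\sum_{v\in A}\deg_{G[A]}(v)$, or a modified stopping rule) are not what closes it. As written, the proof is incomplete.

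The missing computation is short once you track the right quantity. Let $B=|A|-\alpha N$ be the excess. Your spectral bound gives that the maximum degree of $G[A]$ is at least the average degree, hence at least $\frac{(d+\lambda)|A|-\lambda N}{N}=\frac{(d+\lambda)B}{N}$, using $(d+\lambda)\alpha=\lambda$. A selected step removes the chosen vertex \emph{and} its neighbours, so writing $\rho=1-\frac{d+\lambda}{N}$,
\[
	B_{i+1}\ \le\ \rho\, B_i-1,
\]
while discard steps only decrease $B$. This affine recursion has fixed point $-\frac{N}{d+\lambda}$; shifting by it and iterating from $B_0=N-\alpha N=\frac{dN}{d+\lambda}$ gives, after $q$ selected steps,
\[
	B_q\ \le\ \frac{N}{d+\lambda}\left((d+1)\rho^{q}-1\right),
\]
which is nonpositive as soon as $q\ge\frac{\ln(d+1)}{-\ln\rho}$; since $-\ln(1-x)\ge x$, this holds once $q\ge\sigma N$. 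So the constant $\ln(d+1)$ comes precisely from the ``$-1$'' term (removing the selected vertex itself), which shifts the geometric decay so that it actually reaches zero, the ratio of initial to terminal shifted excess being $\frac{(d+1)N/(d+\lambda)}{N/(d+\lambda)}=d+1$. No potential function or refined stopping rule is needed; with this paragraph inserted your argument is complete and coincides with the original proof.
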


The quantity $\alpha N$ is known as the \emph{Hoffman bound}, which is an upper bound on the cardinality of a stable set in $G$.

\subsection{The Johnson graph}

Let $E$ be a finite set, and let $0 < r < |E|$. The Johnson graph~$J(E,r)$ is the graph with vertex set $$\binom{E}{r}:=\{X\subseteq E: ~|X|=r \},$$ in which any two vertices are adjacent if and only if they have $r-1$ elements in common; equivalently, the vertices $X$ and $Y$ are adjacent whenever $|X \triangle Y| = 2$.

Let $[n]:=\{1,\ldots, n\}$. We abbreviate $J(n,r) := J([n],r)$. Clearly~$J(E,r) \cong J(n,r)$ when $|E|=n$. Note that $J(n,r) \cong J(n,n-r)$; the function $X \mapsto [n] \setminus X$ provides an explicit isomorphism.% -- so that in our arguments we can usually assume that $r \le \lfloor n/2 \rfloor$, which will often simplify expressions.

The spectrum of the Johnson graph is known: if $r \le n/2$, the eigenvalues of $J(n,r)$ are
\begin{equation}
	(r-i)(n-r-i)-i, \qquad i = 0,1,\ldots,r.
\end{equation}
Hence, the smallest eigenvalue of $J(n,r)$ is $-r$, and using Theorem~\ref{thm:stablesets} it was derived in \cite{BansalPendavinghVanderpol2014} that
\begin{equation}\label{eq:Johnson_stable}
	\log i(J(n,r)) \le \frac{2}{n}\binom{n}{\lfloor n/2\rfloor}(1+o(1)) \text{ as } n\rightarrow \infty.
\end{equation}
A corresponding lower bound is obtained from a construction by Graham and Sloane~\cite[Theorem 1]{GrahamSloane1980}, who show that
\begin{equation}\label{eq:Johnson_stable2}
	\log i(J(n,r)) \ge \frac{1}{n}\binom{n}{r}.
\end{equation}

If $X \in \binom{E}{r}$, then we will use the graph-theoretic term \emph{neighbourhood} to denote the set
\begin{equation*}
	N(X) := \{X-x+y : x \in X, y \in E\setminus X\}.
\end{equation*}
Note that $N(X)$ is precisely the neighbourhood of $X$, seen as a vertex in the Johnson graph $J(E,r)$.
The neighbourhood of $X$ has the structure of a Cartesian graph product of $K_r$ and $K_{n-r}$. In particular, we can distinguish `rows'
\begin{align}
	&R_X(x) \defeq \{X-x+y : y \in E\setminus X\}, && x \in X \\
	\intertext{and `columns'}
	&C_X(y) \defeq \{X-x+y : x \in X\}, && y \in E\setminus X,
\end{align}
that induce cliques in the neighbourhood of $X$, see Figure~\ref{fig:neighbourhood}.
\begin{figure}
	\begin{minipage}[b]{0.49\textwidth}\centering
		\begin{tikzpicture}[scale=0.7,font=\normalsize,inner sep=2.8]
			% Draw the neighbourhood
			\begin{scope}[scale=1.5]
				\foreach\y in {0,2,3} {
					\foreach\x in {0, 1, 2, 4} {
						\node[circle,fill=black] (N-\x-\y) at (\x,\y) {};
					}
					\node[font=\huge] at (3,\y) {$\cdots$};
				}
				\foreach\x in {0,1,2,4} {
					\node[font=\Huge] at (\x,1) {$\vdots$};
				}
				
				% Vertical boxes
				\draw ($(N-0-3) + (-0.4,0.6)$) rectangle ($(N-0-0)+(0.4,-0.6)$);
				\node[rotate=-30,anchor=north west] at ($(N-0-0)+(0,-0.6)$) {$C_X(y)$};
				\draw ($(N-2-3) + (-0.4,0.6)$) rectangle ($(N-2-0)+(0.4,-0.6)$);
				\node[rotate=-30,anchor=north west] at ($(N-2-0)+(0,-0.6)$) {$C_X(y')$};
				% Horizontal box
				\draw ($(N-0-3) + (-0.6,0.4)$) rectangle ($(N-4-3)+(0.6,-0.4)$);
				\node[anchor=west, inner sep=6pt] at ($(N-4-3)+(0.6,0)$) {$R_X(x)$};
			\end{scope}
			
			% Draw X
			%\node[circle,fill=black] (X) at ($(N-0-0)!0.5!(N-0-3)+(-3,0)$) {};
			%\node[anchor=north east] at (X.south west) {$X$};
			%\foreach\angle in {45,15,-15,-45} {
			%	\draw[thick] (X) -- + (\angle:1.3);
			%}
			
			% Labels of neighbours
			\draw (N-0-3) + (0,2) node[] (N-0-3-label) {$X-x+y$};
			\path[decoration={markings,mark=at position 0.99 with {\arrow[scale=2, transform shape]{>}}}, postaction={decorate}]
				(N-0-3-label.south) to[in=30,out=-30, shorten >=2] (N-0-3.north east);
			\path (N-0-3-label.south) edge[in=30,out=-30, shorten >=2] (N-0-3.north east);
			
			\draw (N-2-3) + (1,2) node[] (N-2-3-label) {$X-x+y'$};
			\path[decoration={markings,mark=at position 0.99 with {\arrow[scale=2, transform shape]{>}}}, postaction={decorate}]
				(N-2-3-label.south) to[in=30,out=-30, shorten >=2] (N-2-3.north east);
			\path (N-2-3-label.south) edge[in=30,out=-30, shorten >=2] (N-2-3.north east);
		\end{tikzpicture}
	\end{minipage}
	\begin{minipage}[b]{0.49\textwidth}\centering
		\begin{tikzpicture}[scale=0.7, font=\normalsize, inner sep=2.8]
			% Draw the neighbourhood
			\begin{scope}[scale=1.5]
				\foreach\y in {0,2,3} {
					\foreach\x in {0, 1, 2, 4} {
						\node[circle, draw=black, ultra thick] (N-\x-\y) at (\x,\y) {};
					}
					\node[font=\huge] at (3,\y) {$\cdots$};
				}
				\foreach\x in {0,1,2,4} {
					\node[font=\Huge] at (\x,1) {$\vdots$};
				}
				
				% Draw independent vertices
				\foreach\x in {0,1,4} {
					\node[circle,fill=black] at (\x,2) {};
				}
				
				\begin{pgfonlayer}{background}
				% Vertical boxes
				\foreach \x/\ind in {0/1,1/1,2/0,4/1} {
					\ifnum\ind=0
						\draw[pattern=north west lines, pattern color=gray] ($(N-\x-3) + (-0.4,0.6)$) rectangle ($(N-\x-0)+(0.4,-0.6)$);
					\else
						\draw ($(N-\x-3) + (-0.4,0.6)$) rectangle ($(N-\x-0)+(0.4,-0.6)$);
					\fi
				}

				% Labels
				\node[rotate=-30,anchor=north west] at ($(N-0-0)+(0,-0.6)$) {$y \in D$};
				\node[rotate=-30,anchor=north west] at ($(N-2-0)+(0,-0.6)$) {$y' \not\in D$};

				% Horizontal boxes
				\foreach \y/\ind in {0/0, 2/1, 3/0} {
					\ifnum\ind=0
						\draw[pattern=north west lines, pattern color=gray] ($(N-0-\y) + (-0.6,0.4)$) rectangle ($(N-4-\y)+(0.6,-0.4)$);
					\else
						\draw ($(N-0-\y) + (-0.6,0.4)$) rectangle ($(N-4-\y)+(0.6,-0.4)$);
					\fi
				}
				
				% Label
				\node[anchor=west, inner sep=6pt] at ($(N-4-3)+(0.6,0)$) {$x \not\in C$};
				\end{pgfonlayer}
			\end{scope}
		\end{tikzpicture}
	\end{minipage}
	\begin{minipage}[t]{0.49\textwidth}
		\captionof{figure}{\label{fig:neighbourhood}The neighbourhood of $X$ in the Johnson graph. The rows and columns, indexed by $x \in X$ resp.\ $y \in E\setminus X$, form cliques.}
	\end{minipage}
	\begin{minipage}[t]{0.49\textwidth}
		\captionof{figure}{\label{fig:neighbourhood_bases}If $r(X) = r-1$, then the bases in $N(X)$ (represented here by solid vertices) can be recovered by removing all rows corresponding to $x \not\in C$ and columns corresponding to $y \not\in D$.}
	\end{minipage}
\end{figure}
\subsection{Matroids} A matroid is a pair $M=(E,\BB)$ such that $E$ is a finite set and $\BB$ is a non-empty set of subsets of $E$ satisfying the {\em base exchange axiom} 
\begin{equation}\text{for all } B, B'\in \BB\text{ and }e\in B\setminus B'\text{ there exists an }f\in B'\setminus B\text{ such that } B-e+f\in\BB.\end{equation}
We assume familiarity with the definitions of circuit, flat, rank, dual etc., for which we refer to Oxley's book \cite{OxleyBook}. 

While we mostly use the notation of \cite{OxleyBook}, the following is nonstandard. We write $\MM_{n,r}$ for the set of all matroids of rank $r$ with ground set $[n]$, and $\MM_n$ for the set of matroids with ground set $[n]$, and we put $m_{n,r}:=\#\MM_{n,r}$ and $m_{n}:=\#\MM_{n}$. 

A matroid $M$ is said to be {\em paving} if each circuit of $M$ has cardinality at least $r(M)$, and $M$ is {\em sparse paving} if both $M$ and its dual are paving. 
We define $$s_{n,r}:=\#\{ M\in \MM_{n,r}: M\text{ is sparse paving}\}, ~s_n:=\#\{ M\in \MM_n: M\text{ is sparse paving}\}.$$

The following lemma is essentially established by Piff and Welsh~\cite{PiffWelsh1971}. A proof can be found in~\cite[Lemma 8]{BansalPendavinghVanderpol2014}.
\begin{lemma}\label{prop:johnson_sparsepaving}
	Let $\BB \subseteq \binom{E}{r}$, then $\BB$ is the collection bases of a sparse paving matroid if and only if $\binom{E}{r}\setminus\BB$ is a stable set	in $J(E,r)$.
\end{lemma}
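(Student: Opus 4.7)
The plan is to translate the stable-set condition directly into matroid-theoretic language and establish both directions by essentially the same kind of argument, namely that adjacent vertices of $J(E, r)$ encode ``too much'' dependency to be compatible with a sparse paving structure.

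For the forward direction, assume that $M = (E, \BB)$ is sparse paving of rank $r$. Since $M$ is paving, every $r$-set outside $\BB$ is a circuit; since $M^*$ is paving, every hyperplane of $M$ has cardinality at most $r$, so each such circuit is closed and therefore a hyperplane. Now suppose for contradiction that $X, Y \in \binom{E}{r} \setminus \BB$ satisfy $|X \triangle Y| = 2$, and write $X = Z + x$, $Y = Z + y$ with $|Z| = r - 1$. Because $X$ is a circuit, the proper subset $Z$ is independent of rank $r - 1$, so $\cl(Z)$ is the unique hyperplane containing $Z$; but both hyperplanes $X$ and $Y$ contain $Z$, forcing $X = \cl(Z) = Y$, a contradiction.

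For the converse, assume $\binom{E}{r} \setminus \BB$ is a stable set in $J(E, r)$. I would first verify the basis exchange axiom for $\BB$: given $B, B' \in \BB$ and $e \in B \setminus B'$, the sets $\{B - e + f : f \in B' \setminus B\}$ are pairwise adjacent in $J(E, r)$, as any two of them differ in exactly two elements. Stability then forces at most one of them to lie in $\binom{E}{r}\setminus\BB$ when $|B' \setminus B| \geq 2$, so some $B - e + f$ is in $\BB$; when $|B' \setminus B| = 1$, the unique such $f$ yields $B - e + f = B' \in \BB$ directly. Hence $\BB$ is the basis set of a matroid $M$ of rank $r$.

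It remains to show that both $M$ and $M^*$ are paving. If $M$ were not paving, some $(r - 1)$-set $Z$ would have rank at most $r - 2$, making every $Z + e$ with $e \in E \setminus Z$ a non-basis; these $r$-sets form a clique of size $n - r + 1 \geq 2$ inside $\binom{E}{r} \setminus \BB$, contradicting stability. Similarly, if $M^*$ were not paving, $M$ would have a hyperplane $H$ with $|H| \geq r + 1$, in which case any two distinct $r$-subsets of $H$ form adjacent non-bases in $\binom{E}{r}\setminus\BB$, again a contradiction. The only step requiring a small dose of attention is the basis exchange verification; its key idea is that all candidate swaps $B - e + f$ lie in a single clique of $J(E, r)$, so stability cheaply supplies the required basis.
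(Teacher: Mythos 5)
Your argument is correct and is essentially the standard one; the paper itself gives no proof of this lemma (it attributes the statement to Piff and Welsh and cites Lemma~8 of the earlier Bansal--Pendavingh--van~der~Pol paper, whose proof runs along the same lines as yours: dependent $r$-sets in a sparse paving matroid are circuit--hyperplanes, and circuit--hyperplanes cannot be adjacent in $J(E,r)$). Two small points of polish. First, in the converse direction you verify the exchange axiom but never observe that $\BB\neq\emptyset$, which the paper's definition of a matroid requires; this does follow from the hypothesis, since for $0<r<|E|$ the graph $J(E,r)$ has an edge, so the full vertex set $\binom{E}{r}=\binom{E}{r}\setminus\emptyset$ is not stable. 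Second, your claim that ``any two distinct $r$-subsets of $H$ form adjacent non-bases'' is literally true only when $|H|=r+1$; for $|H|\ge r+2$ two $r$-subsets may differ in more than two elements, but the argument only needs one adjacent pair (take $Z\subseteq H$ with $|Z|=r-1$ and two distinct elements of $H\setminus Z$), so nothing breaks.
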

Hence $s_{n,r}=i(J(n,r))$, which is bounded by \eqref{eq:Johnson_stable}.

\subsection{The local structure of matroids in the Johnson graph}
The following matroid lemma is elementary, but it has a central role in this paper.  
\begin{lemma}\label{lemma:circuit_cocircuit}
Let $M=(E, \BB)$ is a matroid of rank $r$, and let  $X\in \binom{E}{r}$ be such that $r_M(X)=r-1$. There is a unique circuit $C$ of $M$ so that $C\subseteq X$ and a unique cocircuit $D$ of $M$ so that $D\cap X=\emptyset$. \end{lemma}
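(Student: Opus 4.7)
The plan is to prove both statements by identifying $X$ (resp.\ $E \setminus X$) as a set of nullity one and invoking the general fact that a set of nullity one in a matroid contains exactly one circuit.

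First I would handle the circuit part. Since $|X| = r$ and $r_M(X) = r-1$, the set $X$ is dependent, so it contains at least one circuit; this gives existence. For uniqueness, I would pick a basis $I$ of $X$, which necessarily has size $r-1$, and let $e$ be the unique element of $X \setminus I$. The fundamental circuit $C = C(e, I)$ is the unique circuit of $M$ contained in $I \cup \{e\} = X$ that uses $e$. If $C' \subseteq X$ were any other circuit, then either $e \in C'$, forcing $C' = C$ by the standard uniqueness of the fundamental circuit, or $e \notin C'$, which would put $C'$ inside the independent set $I$, a contradiction.

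For the cocircuit part, I would dualize. A cocircuit of $M$ disjoint from $X$ is precisely a circuit of $M^{*}$ contained in $E \setminus X$, so it suffices to verify that $E \setminus X$ has nullity one in $M^{*}$ and apply the argument of the previous paragraph to $M^{*}$. Using the standard identity $r^{*}(A) = |A| + r_M(E \setminus A) - r_M(E)$, I compute
\begin{equation*}
r^{*}(E \setminus X) = (n - r) + r_M(X) - r = (n-r) + (r-1) - r = (n - r) - 1 = |E \setminus X| - 1,
\end{equation*}
so $E \setminus X$ is a dependent set of nullity one in $M^{*}$, and therefore contains a unique $M^{*}$-circuit.

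There is no real obstacle here; the statement is essentially the observation that a dependent set of corank one (resp.\ its complement on the dual side) is the disjoint union of a circuit and an independent set. The only point that requires a moment of care is the dual rank computation, which is why I would spell it out explicitly before invoking the nullity-one argument a second time.
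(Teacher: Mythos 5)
Your proof is correct. The paper states this lemma without proof (calling it elementary), and your argument is the standard one it implicitly relies on: $X$ is a dependent set of nullity one, hence contains a unique circuit (via the fundamental circuit of the unique element outside a maximal independent subset of $X$), and the cocircuit statement follows by applying the same reasoning to $E\setminus X$ in $M^{*}$, where the rank computation $r^{*}(E\setminus X)=|E\setminus X|-1$ is exactly the point worth spelling out, as you do.
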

The lemma implies that if $r(X)=r-1$, then the set of bases of $M$ in the neighbourhood of $X$ has a very simple structure that is completely determined by the circuit $C$ and the cocircuit $D$:
\begin{equation}\label{eq:simple_neighborhood}
N(X)\cap \BB=\{X-x+y: x\in C, y\in D\},
\end{equation}
see Figure~\ref{fig:neighbourhood_bases}. It is this simplicity which enables us to make faithful descriptions of matroids which are nearly as concise as a description of a stable set in the Johnson graph. In the present paper, we use \eqref{eq:simple_neighborhood} in two ways. 

The first use was already implicit in our previous paper \cite{BansalPendavinghVanderpol2014}, where we used {\em local covers} as a short certificate for (in-)dependence in the neighbourhood of a non-basis. We review the basic definition, and quote the lemma which we will again use in our present argument.

%If $Y$ is a set in a matroid $M$, and $F \in \mathcal{F}(M)$ is a flat such that $|F \cap Y| > r_M(F)$, then $Y$ is necessarily dependent, and $F$ serves as a certificate for the dependence of $Y$. In this case, we say that $F$ \emph{covers} $Y$. Note that $M$ can be reconstructed from its groundset, rank, and a collection of flats such that each non-basis of $M$ is covered by at least one flat in the collection.

%A \emph{local cover} at $X \in \binom{E}{r}$ is a subset $\mathcal{Z}_X \subseteq \mathcal{F}(M)$ with the property that each $Y \in N(X)$ is either independent or covered by some $F \in \mathcal{Z}_X$. If $X$ is a non-basis, then the flat cover can be surprisingly small, as the following lemma shows. The proof can be found in \cite{BansalPendavinghVanderpol2014}, and also follows from Lemma \ref{lemma:circuit_cocircuit}.

If $Y$ is a set in a matroid $M$, and $F \in \mathcal{F}(M)$ is a flat such that $|F \cap Y| > r_M(F)$, then $Y$ is necessarily dependent, and $(F,r_M(F))$ serves as a certificate for the dependence of $Y$. In this case, we say that $(F,r_M(F))$ \emph{covers} $Y$. Note that $M$ can be reconstructed from its groundset, rank, and a collection of (flat,rank)-pairs such that each non-basis of $M$ is covered by at least one flat in the collection.\protect\footnote{NB: In~\cite{BansalPendavinghVanderpol2014}, just the flat $F$ (rather than the pair $(F,r_M(F))$) was used as a certificate for dependence. However, as the rank of $F$ is necessary to reconstruct the matroid, %it is more natural to use the pair.
we choose to use the pair here.
}

A \emph{local cover} at $X \in \binom{E}{r}$ is a subset $\mathcal{Z}_X \subseteq \{(F,r_M(F)) : F \in \mathcal{F}(M)\}$ with the property that each $Y \in N(X) \cup\{X\}$ is either independent, of covered by some $(F,r_M(F)) \in \mathcal{Z}_X$. If $X$ is a non-basis, then the local cover can be surprisingly small, as the following lemma shows. A proof can be found in \cite{BansalPendavinghVanderpol2014}, and also follows from Lemma \ref{lemma:circuit_cocircuit}.

\begin{lemma}[{\cite[Lemma 20]{BansalPendavinghVanderpol2014}}]\label{lemma:local_cover}
	Let $M$ be a rank-$r$ matroid on groundset $E$, and let $X \in \binom{E}{r}$ be dependent in $M$. Then there exists $\mathcal{Z}_X \subseteq \mathcal{F}(M)$ with $|\mathcal{Z}_X| \le 2$, that covers each non-basis $Y \in N(X) \cup \{X\}$.
\end{lemma}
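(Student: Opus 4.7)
The plan is to split into two cases based on $r_M(X)$, both of which are made easy by Lemma~\ref{lemma:circuit_cocircuit} and its corollary \eqref{eq:simple_neighborhood}. Since $X$ is dependent and $|X|=r$, we have $r_M(X)\le r-1$, so the two cases to consider are $r_M(X)=r-1$ and $r_M(X)\le r-2$.

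In the easier case $r_M(X)\le r-2$, I would show that a single flat suffices. Take $F\defeq \cl_M(X)$, so $r_M(F)=r_M(X)\le r-2$. Every $Y\in N(X)$ is of the form $Y=X-x+y$, so $Y\supseteq X-x$ and hence $|F\cap Y|\ge r-1 > r_M(F)$, meaning $(F,r_M(F))$ covers $Y$. The pair also covers $X$ itself since $|F\cap X|=r>r_M(F)$. So $\mathcal{Z}_X=\{(F,r_M(F))\}$ works.

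In the main case $r_M(X)=r-1$, I would apply Lemma~\ref{lemma:circuit_cocircuit} to obtain the unique circuit $C\subseteq X$ and the unique cocircuit $D$ disjoint from $X$, and take $\mathcal{Z}_X\defeq\{(F_1, |C|-1),\,(F_2, r-1)\}$ where $F_1\defeq \cl_M(C)$ and $F_2\defeq E\setminus D$ (a hyperplane, hence a flat of rank $r-1$). By \eqref{eq:simple_neighborhood}, a neighbour $Y=X-x+y$ fails to be a basis iff $x\notin C$ or $y\notin D$. In the former case, $C\subseteq Y$, so $|F_1\cap Y|\ge |C|>|C|-1=r_M(F_1)$, and $Y$ is covered by the first pair. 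In the latter case, $D\cap Y=\emptyset$ (using $D\cap X=\emptyset$ and $y\notin D$), so $Y\subseteq F_2$, giving $|F_2\cap Y|=r>r-1=r_M(F_2)$, and $Y$ is covered by the second pair. Finally, $X$ itself is covered either by $F_1$ (since $C\subseteq X$) or by $F_2$ (since $X\cap D=\emptyset$).

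I don't foresee any real obstacle: the content is already packaged in Lemma~\ref{lemma:circuit_cocircuit}, and the only thing to watch is the bookkeeping in case $r_M(X)=r-1$, making sure the two chosen flats between them handle every non-basis in $N(X)\cup\{X\}$. The slight subtlety worth noting is that when $|C|=r$ (i.e.\ $X$ itself is the circuit), the first flat $F_1=\cl(X)$ has rank $r-1$, so the two flats in $\mathcal{Z}_X$ may coincide in rank with $F_2$ but the argument above still goes through verbatim.
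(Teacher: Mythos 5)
Your proof is correct and follows essentially the same route as the paper's own argument: the case $r_M(X)\le r-2$ is handled by the single flat $\cl_M(X)$, and the case $r_M(X)=r-1$ by the pair $\{\cl_M(C), E\setminus D\}$ coming from Lemma~\ref{lemma:circuit_cocircuit} and \eqref{eq:simple_neighborhood}. The details (including the rank bookkeeping for the flat-rank pairs) all check out.
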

\ignore{\proof If $r_M(X)<r-1$, then taking $F=\cl_M(X)$ we have $|F\cap Y|\geq |X\cap F|=r-1>r_M(X)=r_M(F)$ for all $Y\in N(X)$, so that $\mathcal{Z}_X:=\{F\}$ will do. 
If $r_M(X)=r-1$, then  by the Lemma there is a unique circuit $C$ contained in $X$ and unique cocircuit $D$ disjoint from $X$. Then $\mathcal{Z}_X:=\{\cl_M(C), E\setminus D\}$ is a local cover, since if  $Y\in N(X)$ is dependent, then either $C\subseteq Y$ and $\cl_M(C)$ covers $Y$, or $D\cap Y=\emptyset$ and $E\setminus D$ covers $Y$.
\endproof}

The second use of \eqref{eq:simple_neighborhood} is new to this paper. The very restricted structure of $N(X)\cap \BB$ in the neighbourhood of a dependent set $X$ will allow us to recover the partition $(N(X)\setminus \BB, N(X)\cap \BB)$ from partial information  $(K\setminus \BB, K\cap \BB)$ for certain $K\subseteq N(X)$, so that a faithful matroid encoding can be even more sparse if we rely on a decoder which can infer from \eqref{eq:simple_neighborhood}.

\subsection{Binomial coefficients}

We will use the following standard bound on the sum of binomial coefficients
\begin{equation}\label{eq:binomial_sum}
	\sum_{i=0}^k \binom{n}{k} \le \left(\frac{\e n}{k}\right)^k,
\end{equation}
a proof of which can be found in~\cite[Proposition 1.4]{Jukna2011}. The following estimate of the central binomial coefficient is a consequence of Stirling's approximation:
\begin{equation}\label{eq:binomial_central}
	\binom{n}{\lfloor n/2\rfloor} = \Theta\left(\frac{2^n}{\sqrt{n}}\right)\qquad\text{as $n \to \infty$}.
\end{equation}
%\begin{align}
%	\left(\frac{n}{k}\right)^k &\le \binom{n}{k} \le \left(\frac{\e n}{k}\right)^k; \label{eq:binomial1}\\
%	\sum_{i =0}^k \binom{n}{i} &\le \frac{n-k+1}{n-2k+1}\binom{n}{k} && \text{if $k < n/2$} \nonumber\\
%			&= (1+o(1))\binom{n}{k} && \text{as $n \to \infty$, if $k = o(n)$; and} \label{eq:binomial2}\\
%	\binom{n}{\lfloor n/2\rfloor} &= \Theta\left(\frac{2^n}{\sqrt{n}}\right) && \text{as $n \to \infty$}. \label{eq:binomial3}
%\end{align}
We will also need the following result on binomial coefficients.
\begin{lemma}\label{lemma:binomial4}
	If $k < \lfloor n/2\rfloor$, then $\binom{n}{\lfloor n/2 \rfloor -k} \le \binom{n}{\lfloor n/2\rfloor} \exp_2\left(-\frac{k^2}{\lceil n/2\rceil + k}\right)$.
\end{lemma}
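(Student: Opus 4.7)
The plan is to bound the ratio $\binom{n}{m-k}/\binom{n}{m}$ directly by writing it as a telescoping product. Set $m := \lfloor n/2 \rfloor$ and $M := \lceil n/2 \rceil$, so that $n - m = M$. Then cancelling factorials yields
$$\frac{\binom{n}{m-k}}{\binom{n}{m}} \;=\; \prod_{i=1}^{k} \frac{m-i+1}{M+i},$$
and since $m \le M$ each factor is bounded above by $\frac{M-i+1}{M+i} = 1 - \frac{2i-1}{M+i}$.

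Next, I would apply the elementary inequality $\ln(1-x) \le -x$ term by term and replace each denominator $M+i$ by the slightly larger $M+k$ to give every summand a common denominator. Since $i$ runs from $1$ to $k$ and $\sum_{i=1}^{k}(2i-1) = k^2$, the telescoped log bound becomes
$$\ln \frac{\binom{n}{m-k}}{\binom{n}{m}} \;\le\; -\frac{k^2}{M+k}.$$

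To finish, I would convert from base $e$ to base $2$ using the fact that $e > 2$ implies $e^{-x} \le 2^{-x}$ for all $x \ge 0$; this delivers the claimed inequality with $\exp_2$ in place of $\exp$. No step looks to be a real obstacle; the only point worth a moment's care is the direction of the final base change, which one might initially fear wastes a factor of $\ln 2 < 1$ in the exponent, but in fact it is a free improvement because $e > 2$, so the natural-logarithm computation is already strong enough.
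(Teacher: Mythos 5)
Your proof is correct and follows essentially the same route as the paper: write the ratio $\binom{n}{\lfloor n/2\rfloor-k}/\binom{n}{\lfloor n/2\rfloor}$ as a telescoping product of $k$ factors, bound each factor by a quantity of the form $1-x$, and finish with $1-x\le \e^{-x}$ together with $\e^{-x}\le 2^{-x}$ for $x\ge 0$. Your bookkeeping, summing $\sum_{i=1}^{k}(2i-1)=k^2$ over the common denominator $\lceil n/2\rceil+k$, is if anything marginally cleaner than the paper's, which first passes through the uniform per-factor bound $\bigl(1-k/(\lceil n/2\rceil+k+1)\bigr)^k$ and then relies on the base change to absorb the extra $+1$ in the denominator.
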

\begin{proof}
	Using the identity $\binom{n}{t-1} = \frac{t}{n+1-t}\binom{n}{t}$ repeatedly, we find
	\begin{multline*}
			\binom{n}{\lfloor n/2\rfloor -k}
				= \binom{n}{\lfloor n/2\rfloor} \prod_{i=0}^{k-1} \frac{\lfloor n/2\rfloor - i}{n+1 - (\lfloor n/2\rfloor -i)} \\
				\le\binom{n}{\lfloor n/2\rfloor} \prod_{i=0}^{k-1} \left(1-\frac{k}{\lceil n/2\rceil + k + 1 - i}\right)
				\le \binom{n}{\lfloor n/2\rfloor} \left(1-\frac{k}{\lceil n/2\rceil + k + 1}\right)^k.
	\end{multline*}
	The lemma now follows from the inequality $1-x \le \e^{-x}$.
\end{proof}

\section{The number of matroids}

\subsection{A procedure for encoding non-bases}

The bound in Theorem~\ref{thm:stablesets} is based on a procedure to construct a concise description of stable sets in a regular graph. Bounding the number of such concise descriptions immediately gives an upper bound on the number of stable sets in a regular graph. The procedure was adapted from a procedure described by Alon, Balogh, Morris, and Samotij in~\cite{ABMS2012}, who cite Kleitman and Winston~\cite{KleitmanWinston1982} as the original source. A detailed account of the procedure for counting stable sets can be found in the recent survey paper by Samotij~\cite{Samotij2014}.

In~\cite{BansalPendavinghVanderpol2014}, this idea was combined with local covers to construct a concise description of matroids. In particular, it was shown that any matroid can be described by a stable set (in the Johnson graph) of non-bases $S$, a collection of flats covering all non-bases in $S \cup N(S)$, and 
a (relatively short) list of all the non-bases that are not yet covered.
%the full set of non-bases of a certain set $A$ which is fixed by $S$. 
By bounding the number of possibilities for each of these sets, one obtains an upper bound on the number of matroids.

The bound that was obtained in~\cite{BansalPendavinghVanderpol2014} is dominated by the list of non-bases that are not yet covered. This seems to be wasteful: if we can take into account more information about this list, we may be able to obtain stronger bounds. In the current section, we extend the encoding procedure to capture more information about this list of non-bases.

We will analyse the number of matroids on a fixed ground set $E = [n]$ of rank~$r$. In what follows, we will assume that $r \le n/2$, and we will use $G = J(E,r)$ as a shorthand notation.

We will further fix a linear ordering $\le_G$ on the vertices of $G$. By the \emph{canonical ordering} on $A \subseteq V(G)$, we refer to the following procedure to order the set $A$ linearly. Let $v$ be the vertex with maximum degree in $G[A]$; if there are multiple such $v$, then we choose the one that is minimal with respect to $\le_G$. Call $v$ the first vertex in the canonical ordering, and apply iteratively to $A\setminus\{v\}$.

Throughout the main loop of the procedure, two disjoint vertex sets are maintained: a set $S$ of selected vertices (which will grow during execution), and a set $A$ of available vertices (which will shrink). The main loop runs as long as~$|A|$ is sufficiently large, or~$G[A]$ contains a vertex of sufficiently high degree.

\begin{algorithm}[t]
\begin{algorithmic}[0]
\AlgInput{Matroid $M=(E,\BB)$ of rank $r$ on $n$ elements, $r \le n/2$}
\AlgOutput{$(S,\mathcal{Z},A,T)$}
\AlgLine
\State Set $A\gets V(G)$, $S\gets\emptyset$, $\ZZ\gets\emptyset$
\Comment{$G = J(E,r)$}
\While{$|A| > \alpha_{n,r} N$ \textbf{or} $\Delta(G[A]) \ge r$} \Comment{$\alpha_{n,r} N$ is the Hoffman bound}
	\State Pick the first vertex $X$ in the canonical ordering of $A$
	\If{$X$ is dependent in $M$}
		\State Set $S\gets S\cup\{X\}$, $A\gets A\setminus(\{X\} \cup N(X))$
		\State Set $\ZZ\gets \ZZ\cup\ZZ_X$
			\Comment{$\ZZ_X$ defined in Lemma~\ref{lemma:local_cover}}
			%\Comment{$\mathcal{Z}_X$ smallest possible local cover at $X$}
	\Else
		\State Set $A\gets A\setminus\{X\}$
	\EndIf
\EndWhile
\Statex
\State Set $A'\gets \{X \in A\setminus\BB : \exists e \in X, f \in E\setminus X \text{ such that}$
\State \hfill$R_X(e) \cap A = C_X(f) \cap A = \emptyset, \quad R_X(e) \cap\BB, C_X(f) \cap\BB \neq \emptyset\}$
\State Set $T\gets$ maximal stable set in $G[A']$
\end{algorithmic}
\caption{\label{proc:encoding}The procedure for encoding matroids}
\end{algorithm}

\subsection{Analysis of the procedure}

Throughout this section, we write
\begin{equation}
	N = \binom{n}{r},\quad\text{resp.}\quad d = r(n-r),
\end{equation}
for the number of vertices, resp.\ degree, of the Johnson graph $J(n,r)$. We will also abbreviate
\begin{equation}
	\alpha_{n,r} = \frac{d}{d+\lambda} = \frac{1}{n-r+1},
\end{equation}
and
%where $\alpha_{n,r} N$ is the Hoffman bound, and
\begin{equation}
	\sigma_{n,r} = \frac{\ln(d+1)}{d+\lambda} = \frac{\ln(r(n-r)+1)}{r(n-r+1)}.
\end{equation}
The quantities~$\alpha_{n,r}$ and~$\sigma_{n,r}$ will play the same role as~$\alpha$ and~$\sigma$ in the statement of Theorem~\ref{thm:stablesets}. In particular, $\alpha_{n,r} N$ is the Hoffman bound.

\begin{claim}\label{claim:output_bound}
	Upon termination of the procedure, we have $|S| \le \left(\sigma_{n,r} + \frac{1}{r+1}\alpha_{n,r}\right)\binom{n}{r}$, $|A| \le \alpha_{n,r}\binom{n}{r}$, and $|\mathcal{Z}| \le 2|S|$.
\end{claim}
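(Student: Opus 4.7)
The bounds $|A| \leq \alpha_{n,r}\binom{n}{r}$ and $|\mathcal{Z}| \leq 2|S|$ are immediate. The first is just the loop termination condition: the loop continues as long as $|A| > \alpha_{n,r}\binom{n}{r}$ \emph{or} $\Delta(G[A]) \geq r$, so at termination both fail and in particular $|A| \leq \alpha_{n,r}\binom{n}{r}$. For the second, a vertex is added to $S$ only when it is dependent, and Lemma~\ref{lemma:local_cover} then supplies a local cover $\mathcal{Z}_X$ with $|\mathcal{Z}_X| \leq 2$, so $|\mathcal{Z}|$ grows by at most two per element added to $S$.

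For the bound on $|S|$ my plan is to split the S-additions into two phases according to the value of $|A|$ at the start of each iteration. Let $s_1$ count the S-additions during iterations that begin with $|A| > \alpha_{n,r}\binom{n}{r}$, and $s_2$ the remaining S-additions; since $|A|$ is non-increasing these phases occur consecutively. During phase~1 the procedure is exactly the Kleitman--Winston selection procedure analysed in Theorem~\ref{thm:stablesets}, run on the Johnson graph $G = J(E,r)$. Since $J(E,r)$ is $d$-regular with $d = r(n-r)$ and smallest eigenvalue $-r$, the parameters $\alpha_{n,r}$ and $\sigma_{n,r}$ are precisely the $\alpha$ and $\sigma$ of Theorem~\ref{thm:stablesets} with $\lambda = r$, and invoking the proof of that theorem yields $s_1 \leq \sigma_{n,r}\binom{n}{r}$.

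In phase~2, $|A| \leq \alpha_{n,r}\binom{n}{r}$ already, so the loop condition forces $\Delta(G[A]) \geq r$ at every iteration. The canonical ordering therefore picks a vertex $X$ with $\deg_{G[A]}(X) \geq r$, and whenever $X$ is dependent the step removes $\{X\} \cup (N(X) \cap A)$ from $A$, decreasing $|A|$ by at least $r+1$. Since phase~2 begins with $|A| \leq \alpha_{n,r}\binom{n}{r}$ and $|A| \geq 0$ throughout, we obtain $(r+1) s_2 \leq \alpha_{n,r}\binom{n}{r}$, that is $s_2 \leq \alpha_{n,r}\binom{n}{r}/(r+1)$. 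Summing, $|S| \leq \bigl(\sigma_{n,r} + \alpha_{n,r}/(r+1)\bigr)\binom{n}{r}$. The only real work is the bound on $s_1$; but this is exactly the situation handled by Theorem~\ref{thm:stablesets}, so the main task is to recognise that phase~1 of the present procedure is a verbatim instance of the procedure underlying that theorem (with ``dependent in $M$'' playing the role of ``in the stable set being encoded''), so that no new analysis is required.
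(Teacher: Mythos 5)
Your proposal is correct and follows essentially the same route as the paper: the bound on $|A|$ from the negation of the loop condition, the two-phase split of the $S$-additions with the first phase handled by the Kleitman--Winston analysis behind Theorem~\ref{thm:stablesets} (the paper cites the corresponding lemma of~\cite{BansalPendavinghVanderpol2014} directly) and the second phase by the degree-$\geq r$ counting argument, and $|\mathcal{Z}| \le 2|S|$ from Lemma~\ref{lemma:local_cover}.
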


For future reference, we record
\begin{equation}
	\tilde\sigma_{n,r} = \sigma_{n,r} + \frac{1}{r+1}\alpha_{n,r}.
\end{equation}

\begin{proof}
	Note that the sets $S$, $\mathcal{Z}$ and $A$ only change during execution of the while loop.

	In each traversal, $|A|$ decreases, and the procedure does not stop before $|A| \le \alpha_{n,r} N$, thus proving the bound on $|A|$.

	As $A$ only gets smaller in each traversal, execution of the while loop falls apart into two stages: during the first stage, $|A| > \alpha_{n,r} N$, while during the second stage, $|A| \le \alpha_{n,r} N$ and $\Delta(G[A]) \ge r$.

	The first stage was analysed in~\cite[Lemma 16]{BansalPendavinghVanderpol2014}, where it was shown that during this stage at most $\sigma_{n,r} N$ vertices are added to $S$. At the start of the second stage, $A$ contains at most $\alpha_{n,r} N$ vertices. Throughout this stage, each element that is added to $S$ has degree at least $r$ in $A$, as they are the first vertex in the canonical ordering on $A$. So each time a vertex is added to $S$ during the second stage, at least $r+1$ vertices are removed from $A$. Hence, during the second stage, at most $\frac{1}{r+1}\alpha_{n,r} N$ vertices are added to $S$. Combining the bounds on the number of elements added to $S$ during both stages, we obtain the bound on $|S|$.

	The set $\mathcal{Z}$ is only extended when a vertex is added to $S$. Each time this happens, at most two new flats are introduced to $\mathcal{Z}$ by Lemma~\ref{lemma:local_cover}, so $|\mathcal{Z}| \le 2|S|$.
\end{proof}

The following claim is obvious.

\begin{claim}\label{claim:small_degree}
	Upon termination of Stage 2, all vertices in~$A$ have at most~$r-1$ neighbours in $A$.
\end{claim}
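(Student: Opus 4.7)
The plan is essentially a one-line unpacking of the while-loop's termination condition. The loop continues exactly as long as $|A| > \alpha_{n,r} N$ or $\Delta(G[A]) \ge r$; by De Morgan, upon termination we have both $|A| \le \alpha_{n,r} N$ and $\Delta(G[A]) \le r-1$. Since $\Delta(G[A])$ is by definition the maximum degree of the induced subgraph $G[A]$, the latter inequality says precisely that every vertex in $A$ has at most $r-1$ neighbours in $A$.

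The only thing to double-check is that the statement really refers to the termination of the while loop (called Stage~2 in the discussion of Claim~\ref{claim:output_bound}), and not to some other intermediate moment: Stage~1 is the part of the execution during which $|A| > \alpha_{n,r} N$, and Stage~2 is the remaining part during which $|A| \le \alpha_{n,r} N$ and the loop keeps running only because $\Delta(G[A]) \ge r$; so Stage~2 terminates exactly when the entire while loop terminates, and at that moment $\Delta(G[A]) \le r-1$. There is no genuine obstacle here—the claim is simply reading off the exit condition of the loop, which is why the authors call it obvious.
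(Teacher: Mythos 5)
Your proof is correct and matches the paper exactly: the paper simply declares this claim obvious, and the intended justification is precisely your reading of the negated loop guard, namely that upon exit both $|A| \le \alpha_{n,r} N$ and $\Delta(G[A]) < r$ hold, the latter giving the degree bound. Your added remark that Stage~2 ends exactly when the while loop ends is also consistent with the paper's description of the two stages in the proof of Claim~\ref{claim:output_bound}.
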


\begin{claim}\label{claim:stable}
	$S \cup T$ is a stable set in $J(n,r)$.
\end{claim}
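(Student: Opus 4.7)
The plan is to verify three disjoint assertions: (i) $S$ is stable, (ii) $T$ is stable, and (iii) there are no edges between $S$ and $T$. Each follows directly from how the sets are constructed during the procedure, so no substantial combinatorial work is required.

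For (i), observe that whenever a vertex $X$ is placed into $S$ inside the while loop, the update $A \gets A \setminus (\{X\} \cup N(X))$ removes every neighbour of $X$ from $A$. Since any vertex selected subsequently is chosen from the (now smaller) $A$, no subsequent addition to $S$ can be adjacent to $X$. Iterating over the execution of the loop shows that $S$ spans no edges. Assertion (ii) is immediate: $T$ is defined to be a (maximal) stable set in $G[A']$, and is therefore stable in $G$ itself.

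For (iii), note that $T \subseteq A' \subseteq A_{\text{final}}$, where $A_{\text{final}}$ denotes the set $A$ at the moment the procedure terminates. As observed in (i), whenever $X$ is added to $S$, every vertex in $N(X)$ is deleted from $A$ and is never put back. Consequently $N(X) \cap A_{\text{final}} = \emptyset$ for every $X \in S$, and in particular no vertex of $T$ is a neighbour of any vertex of $S$.

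Combining (i)--(iii), every pair of distinct vertices in $S \cup T$ is non-adjacent in $G = J(n,r)$, so $S \cup T$ is a stable set. No step here is delicate; the claim is essentially a bookkeeping consequence of the updates performed by the procedure.
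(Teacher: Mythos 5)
Your proof is correct and follows essentially the same route as the paper's: $S$ is stable because each addition to $S$ deletes its neighbourhood from $A$, $T$ is stable by construction, and $T \subseteq A$ together with the fact that no vertex of $A$ retains a neighbour in $S$ rules out edges between the two sets. The extra structuring into three assertions is just a more explicit presentation of the same bookkeeping argument.
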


\begin{proof}
	First, $S$ is a stable set, as each time a vertex is added to $S$, its neighbours are deleted from $A$, and hence never will be considered again. By the same argument, no element in $A$ has a neighbour in $S$. By construction, the set $T$ is stable, and as $T\subseteq A$, it follows that $S\cup T$ is stable.
\end{proof}

\begin{claim}\label{claim:S-determines-A}
	Upon termination of the procedure, the triple $(S,T,A)$ is completely determined by $S\cup T$.
\end{claim}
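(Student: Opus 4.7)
The plan is to exhibit an explicit reconstruction procedure which, given only $S\cup T$, produces the triple $(S,T,A)$. The reconstruction replays the main while loop of Procedure~\ref{proc:encoding}, using membership in $S\cup T$ in place of the (unavailable) dependence oracle on $M$. Concretely: initialise $A := V(G)$ and $S' := \emptyset$, and while $|A| > \alpha_{n,r} N$ or $\Delta(G[A]) \ge r$, pick the first vertex $X$ in the canonical ordering of $A$; if $X \in S\cup T$, then add $X$ to $S'$ and delete $\{X\} \cup N(X)$ from $A$, otherwise delete $X$ from $A$ alone. The simulation depends only on $G$, the fixed order $\le_G$, and the input $S \cup T$.

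The key step is to show, by induction on the iteration count, that the simulated $A$ agrees with the $A$ maintained by the original procedure. The canonical ordering and the while-loop guard depend on $A$ alone, so as long as the branch decisions agree the two procedures pick the same vertex $X$ and terminate at the same step. It therefore suffices to verify, for every $X$ picked during the loop, that $X \in S\cup T$ if and only if $X$ is dependent in $M$. In the original procedure the picked $X$ is removed from $A$ in either branch, so $X$ cannot lie in the final $A$; since $T \subseteq A$ upon termination (because $T$ is built as a stable set in $G[A']$ with $A' \subseteq A$), this forces $X \notin T$. Hence $X \in S\cup T$ iff $X \in S$, which by the original branching occurs iff $X$ is dependent in $M$. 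The induction then goes through, and the simulation outputs exactly the original set $S$ and the original final set $A$.

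Given $S$ and $A$, the set $T$ is finally recovered as $T = (S\cup T) \setminus S$: the sets $S$ and $T$ are disjoint because $S \cap A = \emptyset$ at termination (every vertex added to $S$ is simultaneously removed from $A$) while $T \subseteq A$. I do not foresee any real obstacle in this plan; the single observation that makes the argument work is that the vertices examined by the while loop are removed from $A$ and therefore cannot appear in $T$, so on precisely those vertices where the simulation needs to decide dependence, membership in $S\cup T$ is equivalent to membership in $S$.
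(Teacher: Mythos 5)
Your proof is correct and is essentially the paper's own argument: the paper phrases the replay as running the procedure a second time on the sparse paving matroid whose set of non-bases is $S\cup T$, which is exactly your simulation of the while loop with the dependence oracle replaced by the test ``$X\in S\cup T$''. Your justification of the key equivalence --- a picked vertex is removed from $A$ in either branch and therefore cannot lie in $T\subseteq A$ --- is, if anything, slightly more direct than the paper's remark that vertices of $T$ come after those of $S$ in the canonical ordering.
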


\begin{proof}
	Let $(S,\mathcal{Z},A,T)$ be the output of the procedure when it is run on input $M = ([n],\BB)$ or rank $r$. As $S\cup T$ is a stable set in $J(n,r)$, the set $\binom{[n]}{r} \setminus (S\cup T)$ is the set of bases of a sparse paving matroid $M'$. Let $(S',\mathcal{Z}',A',T')$ be the result of running the procedure on input $M'$. We claim that $S' = S$ and $A' = A$. This implies the lemma, as $T = (S\cup T)\setminus S'$.

	The claim follows, as the order in which $r$-sets are considered in the main loop is deterministic, and depends only on the choice that is made in each traversal. These choices are the same in both instances. By construction of $M'$, we have $X$ dependent (when encoding $M$) if and only if $X \in S$, which is equivalent to saying that $X$ is dependent (when encoding $M'$). The final equivalence follows, since vertices in $T$ come after vertices in $S$ in the canonical ordering in each traversal.
\end{proof}

%\begin{claim}\label{claim:S-determines-A}
%	Upon termination of the procedure, the set $A$ is completely determined by $S$.
%\end{claim}
%
%\begin{proof}
%	Let $(S,\mathcal{Z},A,T)$ be the output of the procedure when it is run on input $M = ([n],\mathcal{B})$ of rank $r$. As $S$ is a stable set in $J(n,r)$, the set $\binom{[n]}{r}\setminus S$ is the set of bases of a sparse paving matroid $M'$. Let $(S',\mathcal{Z}',A',T')$ be the result of running the procedure on input $M'$. We claim that $S' = S$, and $A = A'$ --- the latter equality proves the claim.
%
%	The equality follows, as the order in which $r$-sets are considered is deterministic, and depends only on the choice that is made in each traversal. These choices are the same in both instances, as by construction of $M'$, we have $X$ dependent (when encoding $M$) if and only if $X \in S$, which is equivalent to saying that $X$ is dependent (when encoding $M'$).
%	%As $M'$ is sparse paving, it does not contain any loops or coloops (TODO: borderline cases: rank 1, rank $n-1$?), so $A'$ does not change after Stage~2. We can recover $A$ from $A'$ and $\mathcal{L} = \{\cl_M(\emptyset), [n]\setminus \cl_M^*(\emptyset)\}$ by removing from $A'$ all $X$ with $X\cap\cl_M(\emptyset) \neq \emptyset$ or $\cl_M^*(\emptyset) \not\subseteq X$.
%\end{proof}

Let $K$ be the set of non-bases of a matroid. Note that throughout the procedure,
\begin{equation}\label{eq:invariant}
	S \subseteq K \subseteq S \cup N(S) \cup A
\end{equation}
is maintained as an invariant.

If the encoding procedure indeed constructs a concise description of matroids, it should be possible to reconstruct $K$ from the output of the procedure. Non-bases in $S \cup N(S)$ are easily recognised, as they are covered by $\mathcal{Z}$. On the other hand, recognising non-bases in $A$ is a bit more involved.

The following claim is the engine of the corresponding decoding procedure. It roughly states that if $X \in A$, then $X$ being dependent is completely determined by $(S,\mathcal{Z},T)$.

\begin{claim}\label{claim:reconstruction}
	Let~$M$ be a matroid without loops and coloops, and let~$(S,\mathcal{Z},A,T)$ be the output of the procedure on input $M$. Let $X \in A$, then $X$ is dependent in~$M$ if and only if
	\begin{enumerate}[(i)]
		\begin{item}
			there exists $x \in X$ such that $R_X(x)$ is disjoint from $A$ and fully dependent; or
		\end{item}
		\begin{item}
			there exists $y \in E\setminus X$ such that $C_X(y)$ is disjoint from $A$ and fully dependent; or
		\end{item}
		\begin{item}
			$X \in T$; or
		\end{item}
		\begin{item}
			$X$ has a neighbour $Y = X - x + y$ in $T$, and there are $e \in Y$, $f \in E \setminus Y$ with the property that both $R_Y(e)$ and $C_Y(f)$ are disjoint from $A$, both contain a basis, and at least one of $Y-e+x \in R_Y(e)$ and $Y-y+f \in C_Y(f)$ is dependent.
		\end{item}
	\end{enumerate}
\end{claim}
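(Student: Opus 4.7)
The plan is to establish both directions using Lemma~\ref{lemma:circuit_cocircuit} together with \eqref{eq:simple_neighborhood} as the structural engine, supplemented by the degree bound of Claim~\ref{claim:small_degree}.

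For the backward direction I would handle each of (i)--(iv) in turn. Condition (iii) is immediate, since $T \subseteq A'$ and $A'$ contains only non-bases. For (i), if $X$ were a basis then $R_X(x)$ fully dependent would force every $y \in E \setminus X$ to lie in $\cl(X-x)$, so $E \setminus \{x\} \subseteq \cl(X-x)$; the flat $\cl(X-x)$ has rank $r-1$, and this would make $x$ a coloop---excluded by hypothesis. Condition (ii) is dual. For (iv), I first observe that $Y \in T \subseteq A'$ forces $r_M(Y) = r-1$ (otherwise no row or column of $Y$ could contain a basis), so Lemma~\ref{lemma:circuit_cocircuit} supplies a circuit $C_Y \subseteq Y$ and cocircuit $D_Y$ disjoint from $Y$. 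By \eqref{eq:simple_neighborhood}, ``$R_Y(e)$ contains a basis'' is equivalent to $e \in C_Y$ and ``$C_Y(f)$ contains a basis'' equivalent to $f \in D_Y$. Then if $X = Y-y+x$ were a basis, \eqref{eq:simple_neighborhood} would give $y \in C_Y$ and $x \in D_Y$, which would make both $Y-e+x$ and $Y-y+f$ bases---contradicting the dependence condition in (iv).

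For the forward direction, assume $X$ is dependent and split on $r_M(X)$. If $r_M(X) < r-1$, the flat $\cl_M(X)$ certifies that every $Y \in N(X)$ is dependent, so all $r$ rows are fully dependent; Claim~\ref{claim:small_degree} bounds $|A \cap N(X)| \le r-1$, so pigeonhole yields a row disjoint from $A$, giving (i). If $r_M(X) = r-1$, Lemma~\ref{lemma:circuit_cocircuit} and \eqref{eq:simple_neighborhood} identify a circuit $C \subseteq X$ and cocircuit $D \subseteq E \setminus X$ such that the fully dependent rows are exactly those indexed by $X \setminus C$ and the fully dependent columns exactly those indexed by $(E \setminus X) \setminus D$. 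If (i) or (ii) holds we are done; otherwise every fully dependent row and column meets $A$. A refined counting argument will then exhibit $x \in C$ with $R_X(x) \cap A = \emptyset$ and $y \in D$ with $C_X(y) \cap A = \emptyset$, placing $X$ in $A'$. Maximality of $T$ in $G[A']$ gives either $X \in T$ (condition (iii)) or a neighbour $Y \in T$; in the latter case, the witnesses $e \in C_Y$, $f \in D_Y$ coming from $Y \in A'$ give (iv) by the same contrapositive used in the backward direction---if both $Y-e+x$ and $Y-y+f$ were bases, so would $X = Y-y+x$ be.

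The step I expect to be the main obstacle is this refined pigeonhole inside the $r_M(X) = r-1$ case: I need not just \emph{some} row disjoint from $A$ but specifically one indexed by the circuit $C$. The key observation is that once (i) fails, all $r - |C|$ fully dependent rows must be forced into the at most $r-1$ neighbours of $X$ in $A$, leaving at most $|C| - 1$ rows indexed by $C$ that can meet $A$; hence at least one row indexed by $C$ escapes $A$. The dual count for columns additionally uses $r \le n/2$ to guarantee that enough columns exist. Everything else should reduce to bookkeeping around \eqref{eq:simple_neighborhood}.
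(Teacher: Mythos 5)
Your proof is correct and follows essentially the same route as the paper's: sufficiency of (i)--(iv) via the absence of loops and coloops and the circuit--cocircuit structure \eqref{eq:simple_neighborhood}, and necessity by showing that a dependent $X$ satisfying neither (i) nor (ii) lies in $A'$ and then invoking maximality of $T$. Your explicit counting (Claim~\ref{claim:small_degree} plus $r \le n/2$) placing $X$ in $A'$ fills in a step the paper leaves implicit, and your ``refined'' pigeonhole is more than needed---once (i) fails, any row disjoint from $A$ is automatically not fully dependent and hence contains a basis---but it is valid.
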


\begin{proof}
	To prove sufficiency, suppose that $X \in A$. If (i) holds, then $X$ must be dependent. For if $X$ would be independent, and each $Y \in R_X(x)$ is dependent, then $x$ is a coloop, which contradicts our assumption on $M$. Similarly, if (ii) holds, then $X$ must be dependent. For if it would be independent, and each $Y \in C_X(y)$ would be dependent, then $y$ is a loop, again contradicting our assumption on $M$.

	If (iii) holds, then $X$ must be dependent, as by construction $T$ contains only non-bases.

	Finally, suppose that (iv) holds. Let $Y = X - x + y$ be an element of $T$ neighbouring $X$ satisfying the properties mentioned in (iv). As $X$ is dependent, and has an independent neighbour, it must have rank $r-1$. It follows that there is a unique circuit $C$ contained in $Y$, and a unique cocircuit $D$ disjoint from $Y$. As $C_Y(f)$ contain an independent set, we have
	\begin{equation*}
		C = \{g \in Y : Y - g + f\text{ is independent}\} \subseteq C_Y(f),
	\end{equation*}
	and since $R_Y(e)$ contains an independent set, we have
	\begin{equation*}
		D = \{h \in E\setminus Y : Y - e + h\text{ is independent}\} \subseteq R_Y(e).
	\end{equation*}
	Note that $X = Y - y + x$ is independent if and only if $C$ is not contained in $X$ (so $y \in C$, or equivalently $Y - y + f$ is independent), and $X$ is not disjoint from $D$ (so $x \in D$, or equivalently $Y - e + x$ is independent). Taking the contrapositive, we find that $X$ is dependent if and only if at least one of $Y-y+f$ or $Y-e+x$ is dependent.

	It remains to prove necessity. Let us assume that $X \in A$ is dependent. If neither (i) nor (ii) holds, then~$X \in A'$. As~$T$ is a maximal stable set in $G[A']$, we have $A'\subseteq T\cup N(T)$, so either $X \in T$, or $X$ has a neighbour in $T$. In the former case, we have (iii), and we are done. So assume that $X$ has a neighbour in $T$. Call this neighbour $Y = X - x + y$. As $T \subseteq A'$, there must exist $e \in Y$ and $f \in E\setminus Y$, so that $R_Y(e)$ and $C_Y(f)$ are disjoint from $A$, and both contain an independent set. Now we use again that $X$ is dependent if and only if at least one of $Y-y+f$ or $Y-e+x$ are dependent.
\end{proof}

The following lemma shows that the procedure can be used to construct an alternative description for a matroid, provided that the matroid has no loops or coloops.

\begin{lemma}\label{lemma:concise-description}
	Let $M = ([n],\BB)$ be a matroid of rank $r$ that does not have any loops or coloops. Let $(S,\mathcal{Z},A,T)$ be the output of the procedure on input $M$. Then $M$ can be reconstructed from $n$, $r$, and $(S\cup T, \mathcal{Z})$.%$(S,\mathcal{Z},T)$.
\end{lemma}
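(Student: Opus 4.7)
The plan is to exhibit an explicit decoding procedure that, given $n$, $r$, and $(S\cup T,\mathcal{Z})$, outputs the set of bases of $M$. The procedure proceeds in two stages: first recover the auxiliary sets $(S,T,A)$, then classify each $X\in\binom{[n]}{r}$ as basis or non-basis.

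For the first stage, note that $S\cup T$ is a stable set in $J(n,r)$ by Claim \ref{claim:stable}. By Lemma \ref{prop:johnson_sparsepaving}, there is a sparse paving matroid $M'$ whose set of non-bases is exactly $S\cup T$. Running the encoding procedure on $M'$ (which we can do because we know $n$, $r$, and the bases of $M'$) recovers $S$ and $A$ by Claim \ref{claim:S-determines-A}, and then $T=(S\cup T)\setminus S$.

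For the second stage, I would classify $X\in\binom{[n]}{r}$ according to which region it lies in. First, if $X\notin S\cup N(S)\cup A$, then $X$ is a basis by the invariant \eqref{eq:invariant}. Second, if $X\in S\cup N(S)$, then by Lemma \ref{lemma:local_cover} the union $\mathcal{Z}=\bigcup_{Y\in S}\mathcal{Z}_Y$ covers every non-basis lying in $\bigcup_{Y\in S}(\{Y\}\cup N(Y))=S\cup N(S)$; conversely any pair $(F,r_M(F))\in\mathcal{Z}$ covers only dependent sets by the definition of covering. Hence $X$ is a non-basis iff some $(F,\rho)\in\mathcal{Z}$ satisfies $|F\cap X|>\rho$. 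Third, if $X\in A$, apply Claim \ref{claim:reconstruction}: the hypothesis is satisfied since $M$ has no loops or coloops, and $X$ is dependent iff one of conditions (i)--(iv) holds.

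The only point requiring care is that the tests (i)--(iv) invoke the dependence status of certain $r$-sets in rows $R_Z(e)$ and columns $C_Z(f)$ which are \emph{explicitly} required to be disjoint from $A$. These $r$-sets therefore lie in $(S\cup N(S))\cup(\binom{[n]}{r}\setminus(S\cup N(S)\cup A))$, and for both of these regions their dependence status has already been determined in the two preceding cases, without any reference to vertices of $A$. Thus the classification is non-circular and well-defined, and the set of bases $\BB$ is recovered. The main (minor) obstacle is simply verifying this ordering of the decoding steps; the substantive content was carried by Claims \ref{claim:S-determines-A} and \ref{claim:reconstruction}.
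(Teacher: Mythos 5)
Your proposal is correct and follows essentially the same route as the paper's own proof: recover $(S,T,A)$ from the stable set $S\cup T$ via Claim~\ref{claim:S-determines-A}, classify $r$-sets outside $A$ using the invariant \eqref{eq:invariant} and the covers in $\mathcal{Z}$, and decide dependence inside $A$ via Claim~\ref{claim:reconstruction}. Your explicit remark that the rows and columns tested in conditions (i)--(iv) are disjoint from $A$, so the decoding is non-circular, is exactly the point the paper verifies in its itemised check at the end of its proof.
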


\begin{proof}
	%Note that $M$ can be reconstructed from $n$, $r$, and $(S,\mathcal{Z},T)$, if for each $X \in \binom{[n]}{r}$ it can be decided whether $X$ is dependent or independent in $M$, based on the available information alone.
	First, it follows from Claim~\ref{claim:S-determines-A} that the pair $(S\cup T, \mathcal{Z})$ actually contains the more detailed information $(S, \mathcal{Z}, A, T)$. The matroid $M$ can be reconstructed from $n$, $r$, and $(S,\mathcal{Z},A,T)$ if for each $X \in \binom{[n]}{r}$ it can be decided whether $X$ is dependent or independent, bases on the available information alone.

	Let $K = \binom{[n]}{r}\setminus\BB$ be the set of non-bases in $M$. Recall that throughout the procedure, \eqref{eq:invariant} is maintained as an invariant. By Claim~\ref{claim:S-determines-A}, it can be verified from $(S,\mathcal{Z})$ if $X$ is in $S \cup N(S) \cup A$. If it is not, then $X$ must be independent. So we can suppose that $X \in S \cup N(S) \cup A$.

	If $X \in S\cup N(S)$, then $X$ is dependent if and only if it is covered by some flat in $\mathcal{Z}$.

	Hence, for each $r$-set that is not in $A$, we can reconstruct whether it is dependent from $(S,\mathcal{Z})$ alone. It remains to identify the non-bases in $A$. By Claim~\ref{claim:reconstruction}, we only need to verify, for each $X \in A$, if at least one of (i)--(iv) holds. Verification of each of these items depends only on $(S,\mathcal{Z},T)$:
	\begin{enumerate}[(i)]
		\begin{item}
			By Claim~\ref{claim:S-determines-A}, the set $A$ is completely determined by $S$, so for each neighbour of $X$, checking if it belongs to $A$ can be done if only $S$ is available. If $Y$ is a neighbour of $X$ that is not in $A$, then either it is not in $S\cup N(S)$, in which case $Y$ must be independent -- or it is in $S \cup N(S)$, in which case dependency of $Y$ depends only on $(S,\mathcal{Z})$.
		\end{item}
		\begin{item}
			Similar.
		\end{item}
		\begin{item}
			$X \in T$ obviously depends only on $T$.
		\end{item}
		\begin{item}
			For each neighbour $Y$ of $X$, $Y \in T$ depends only on $T$. By Claim~\ref{claim:S-determines-A}, for each neighbour of $Y$, determining whether it is in $A$ depends only on $S$. If the neighbour is not contained in $A$, then we can deduce from $(S,\mathcal{Z})$ whether it is independent or not, by the previous part of this proof. \qedhere
		\end{item}
	\end{enumerate}
\end{proof}

\subsection{Bounding the number of matroids}
Let us write $m'_n$ (resp.\ $m'_{n,r}$) for the number of matroids (resp.\ rank-$r$ matroids) on groundset $[n]$ that do not contain loops or coloops.
\begin{lemma}\label{lemma:mnr_bound}
	$m'_{n,r}
		\le s_{n,r} \sum_{k=0}^{2\lceil \tilde\sigma_{n,r} N\rceil} \binom{2^n(n+1)}{k}
		%\le s_{n,r} \exp_2\left(2\left\lceil \tilde\sigma_{n,r} \binom{n}{r} \right\rceil \log\left(\frac{\e 2^n (n+1)^3}{4\binom{n}{r}}\right)\right)
	$.
\end{lemma}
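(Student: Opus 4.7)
The plan is to use Lemma~\ref{lemma:concise-description} to set up an injective encoding and then bound the number of possible encodings. Specifically, for each $M \in \MM_{n,r}$ without loops or coloops, let $(S,\mathcal{Z},A,T)$ be the output of Procedure~\ref{proc:encoding} applied to $M$. Lemma~\ref{lemma:concise-description} tells us $M$ can be reconstructed from $n$, $r$, and the pair $(S\cup T,\mathcal{Z})$, so the map $M \mapsto (S\cup T,\mathcal{Z})$ is injective on the set of loopless, coloopless rank-$r$ matroids. Hence $m'_{n,r}$ is bounded by the number of pairs $(S\cup T,\mathcal{Z})$ that arise as images.

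Next I would bound the two coordinates separately. For the first coordinate, Claim~\ref{claim:stable} guarantees that $S\cup T$ is a stable set in $J(n,r)$, so the number of possibilities is at most $i(J(n,r))$, which equals $s_{n,r}$ by Lemma~\ref{prop:johnson_sparsepaving}. For the second coordinate, each element of $\mathcal{Z}$ is a pair $(F,r_M(F))$ with $F \subseteq [n]$ and $r_M(F) \in \{0,1,\ldots,n\}$, giving at most $2^n(n+1)$ candidate pairs in total. By Claim~\ref{claim:output_bound}, $|\mathcal{Z}| \le 2|S| \le 2\tilde\sigma_{n,r} N \le 2\lceil \tilde\sigma_{n,r} N\rceil$, so the number of choices for $\mathcal{Z}$ is at most
\[
	\sum_{k=0}^{2\lceil \tilde\sigma_{n,r} N\rceil} \binom{2^n(n+1)}{k}.
\]
Multiplying the two bounds gives the claimed inequality.

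There is no real obstacle here: the substantive content (the injectivity of the encoding, the stability of $S\cup T$, and the bound on $|S|$ and $|\mathcal{Z}|$) has already been established in Lemma~\ref{lemma:concise-description}, Claim~\ref{claim:stable}, and Claim~\ref{claim:output_bound}. The only point that deserves a sentence of justification is that the entire collection $\mathcal{Z}$, being an unordered subset of the universe of (flat, rank) pairs of size at most $2^n(n+1)$, is counted correctly by the binomial sum; using $k$ ranging up to $2\lceil \tilde\sigma_{n,r} N\rceil$ absorbs the bound $|\mathcal{Z}|\le 2|S|\le 2\tilde\sigma_{n,r}N$. This yields the lemma immediately.
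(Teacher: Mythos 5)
Your proposal is correct and follows essentially the same route as the paper: invoke Lemma~\ref{lemma:concise-description} for injectivity of $M \mapsto (S\cup T,\mathcal{Z})$, count the stable sets via $i(J(n,r)) = s_{n,r}$, and count the possible $\mathcal{Z}$ as subsets of $2^{[n]}\times\{0,\ldots,n\}$ of size at most $2\lceil\tilde\sigma_{n,r}N\rceil$ using Claim~\ref{claim:output_bound}. The only difference is that you make the injectivity and the appeal to Claim~\ref{claim:stable} explicit, which the paper leaves implicit.
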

%TODO: can be further bounded by e.g.\ $\ldots \le \exp_2\left(2.1 \tilde\sigma_{n,r} \binom{n}{r} \log\frac{2^n n^3}{\binom{n}{r}}\right)$.
\begin{proof}
In view of Lemma~\ref{lemma:concise-description}, a matroid $M \in \MM_{n,r}$ without loops or coloops can be described by a pair $(U, \mathcal{Z})$, in which  $U$ is a stable set of $J(n,r)$, and $\mathcal{Z} \subseteq \{(F,r_M(F)) : F \in \mathcal{F}(M)\}$. By Claim~\ref{claim:output_bound}, we can assume that $|\mathcal{Z}| \le 2\lceil \tilde\sigma_{n,r} N\rceil$.

Note that $m'_{n,r}$ is at most the number of pairs $(U, \mathcal{Z})$, and a bound on the number of such pairs follows immediately from the following two observations. First, as $U$ is a stable set in $J(n,r)$, it can be chosen in at most $i(J(n,r)) = s_{n,r}$ ways. Second, as $\mathcal{Z}$ is a subset of $2^{[n]} \times \{0,1,\ldots,n\}$ of cardinality at most $2\lceil \tilde\sigma_{n,r} N\rceil$, it can be chosen in at most $\sum_{k=0}^{2\lceil \tilde\sigma_{n,r} N\rceil} \binom{2^n (n+1)}{k}$ ways.
\end{proof}

The sum of binomial coefficients in the statement of Lemma~\ref{lemma:mnr_bound} can be bounded by an application of~\eqref{eq:binomial_sum}
\begin{equation}\label{eq:mnr_bound_nonuniform}
	\sum_{k=0}^{2\lceil \tilde\sigma_{n,r} N\rceil} \binom{2^n(n+1)}{k}
		\le \left(\frac{\e 2^n (n+1)}{2\left\lceil\tilde{\sigma}_{n,r} \binom{n}{r}\right\rceil}\right)^{2\left\lceil \tilde{\sigma}_{n,r} \binom{n}{r}\right\rceil}
		\le \left(\frac{\e 2^n (n+1)^3}{4\binom{n}{r}}\right)^{2 \left\lceil\tilde\sigma_{n,r} \binom{n}{r}\right\rceil}.
	%2\binom{2^n(n+1)}{2\left\lceil \tilde\sigma_{n,r} \binom{n}{r}\right\rceil} \le 2\left(\frac{\e 2^n (n+1)^3}{4\binom{n}{r}}\right)^{2 \left\lceil\tilde\sigma_{n,r} \binom{n}{r}\right\rceil},
\end{equation}
%\begin{proof}
%In view of Lemma~\ref{lemma:concise-description}, a matroid $M \in \MM_{n,r}$ without loops or coloops can be described by a pair $(U, \mathcal{Z})$, in which  $U$ is a stable set of $J(n,r)$, and $\mathcal{Z} \subseteq \{(F,r_M(F)) : F \in \mathcal{F}(M)\}$. By Claim~\ref{claim:output_bound}, we can assume that $|\mathcal{Z}| \le 2\lceil \tilde\sigma_{n,r} N\rceil$.
%
%Note that $m'_{n,r}$ is at most the number of pairs $(U, \mathcal{Z})$, and a bound on the number of such pairs follows immediately from the following two observations. First, as $U$ is a stable set in $J(n,r)$, it can be chosen in at most $i(J(n,r)) = s_{n,r}$ ways. Second, as $\mathcal{Z}$ is a subset of $2^{[n]} \times \{0,1,\ldots,n\}$ of cardinality at most $2\lceil \tilde\sigma_{n,r} N\rceil$, it can be chosen in at most $\sum_{k=0}^{2\lceil \tilde\sigma_{n,r} N\rceil} \binom{2^n (n+1)}{k}$ ways.
%\end{proof}
The next lemma gives a bound that is uniform in $r$.
\begin{lemma}\label{lemma:sigma_bound}
	There exists $c > 0$ such that for sufficiently large $n$, and $0 \le r \le n/2$,
	\begin{equation*}
		\sum_{k=0}^{2\lceil \tilde\sigma_{n,r} N\rceil} \binom{2^n(n+1)}{k} \le \exp_2\left(c\frac{\log^2 n}{n^2}\binom{n}{\lfloor n/2\rfloor}\right).
	\end{equation*}
\end{lemma}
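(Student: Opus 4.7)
The plan is to take the base-$2$ logarithm of \eqref{eq:mnr_bound_nonuniform}. Since $\log_2\!\bigl(\e 2^n(n+1)^3/(4\binom{n}{r})\bigr) = n - \log_2\binom{n}{r} + O(\log n)$ and $2\lceil \tilde\sigma_{n,r}\binom{n}{r}\rceil \le 2\tilde\sigma_{n,r}\binom{n}{r} + 2$, it suffices to bound
$$P(r) \defeq \tilde\sigma_{n,r}\binom{n}{r}\bigl(n - \log_2\binom{n}{r} + O(\log n)\bigr)$$
uniformly in $1 \le r \le n/2$ by $O(\log^2 n/n^2)\binom{n}{\lfloor n/2\rfloor}$; the $+2$ from the ceiling contributes an additive $O(n)$ which is dwarfed by the target, and $r=0$ is trivial. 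Expanding the definitions yields $\tilde\sigma_{n,r} \le C\log n/(r(n-r))$ for some absolute constant $C$ and all sufficiently large $n$.

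The key observation is that the function $f(x) = x(n - \log_2 x)$ is increasing on $[0, 2^n/\e]$, because $f'(x) = n - \log_2 x - \log_2 \e$. From \eqref{eq:binomial_central} one has $\binom{n}{\lfloor n/2\rfloor} < 2^n/\e$ for all sufficiently large $n$, so $\binom{n}{r} < 2^n/\e$ for every $1 \le r \le n/2$, and therefore
$$\binom{n}{r}\bigl(n - \log_2\binom{n}{r}\bigr) \le \binom{n}{\lfloor n/2\rfloor}\bigl(n - \log_2\binom{n}{\lfloor n/2\rfloor}\bigr) = O(\log n)\binom{n}{\lfloor n/2\rfloor},$$
where the final equality uses \eqref{eq:binomial_central} once more.

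With this uniform bound in hand, I would split on $r$. In the range $r \ge n/3$, we have $r(n-r) \ge 2n^2/9$, hence $\tilde\sigma_{n,r} = O(\log n/n^2)$, and combining with the display gives $P(r) = O(\log^2 n/n^2)\binom{n}{\lfloor n/2\rfloor}$. In the complementary range $r < n/3$, one uses the coarser estimate $\tilde\sigma_{n,r} = O(\log n/n)$ (valid for $r \ge 1$) together with $n - \log_2\binom{n}{r} \le n$ to obtain $P(r) \le O(\log n) \binom{n}{r} \le O(\log n)\binom{n}{\lfloor n/3\rfloor}$. Applying Lemma~\ref{lemma:binomial4} with $k = \lfloor n/6\rfloor$ yields $\binom{n}{\lfloor n/3\rfloor} \le \binom{n}{\lfloor n/2\rfloor}\exp_2(-\Omega(n))$, so $P(r)$ is in fact exponentially smaller than the target for large~$n$.

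I do not foresee any real obstacle: once the problem is reduced to bounding $P(r)$, the argument is a routine but careful two-case calculation. The only conceptual point is recognising the monotonicity of $f(x) = x(n - \log_2 x)$, which lets every $r$ near $\lfloor n/2\rfloor$ be treated at once; otherwise one would be forced into a finer case split around $\lfloor n/2\rfloor$ and would need to supplement Lemma~\ref{lemma:binomial4} with a matching lower bound on $\binom{n}{\lfloor n/2\rfloor - k}$.
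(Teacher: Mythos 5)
Your argument is correct, but it reaches the uniform bound by a genuinely different route than the paper. The paper first bounds the exponent $2\lceil\tilde\sigma_{n,r}\binom{n}{r}\rceil$ uniformly in $r$ by $\frac{22\ln n}{n^2}\binom{n}{\lfloor n/2\rfloor}$ --- importing the estimate $\sigma_{n,r}\binom{n}{r}\le\frac{8\ln n}{n^2}\binom{n}{\lfloor n/2\rfloor}$ wholesale from \cite[Lemma 19]{BansalPendavinghVanderpol2014} and handling the $\frac{1}{r+1}\alpha_{n,r}\binom{n}{r}$ term by monotonicity in $r$ --- and only then applies \eqref{eq:binomial_sum}, observing that the resulting base $\e 2^n(n+1)\big/\tfrac{22\ln n}{n^2}\binom{n}{\lfloor n/2\rfloor}$ is $n^{O(1)}$ by \eqref{eq:binomial_central}, so the logarithmic factor costs only $O(\log n)$. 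You instead keep the exponent and the logarithmic factor together in $P(r)=\tilde\sigma_{n,r}\binom{n}{r}\bigl(n-\log_2\binom{n}{r}+O(\log n)\bigr)$ and control this product directly, via the monotonicity of $x\mapsto x(n-\log_2 x)$ on $[0,2^n/\e]$ for $r\ge n/3$ and via Lemma~\ref{lemma:binomial4} (exponential decay of $\binom{n}{r}$ away from the centre) for $r<n/3$. What your version buys is self-containedness: it reproves, rather than cites, the only nontrivial uniform-in-$r$ input, at the cost of a two-case analysis; the monotonicity of $x(n-\log_2 x)$ is a nice device that lets all $r$ near $n/2$ be handled at once. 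The minor loose ends (the degenerate case $r=0$, where $\sigma_{n,0}$ is not even well defined by the displayed formula, and the exact index comparison $\lfloor n/3\rfloor\le\lfloor n/2\rfloor-\lfloor n/6\rfloor$ needed to invoke Lemma~\ref{lemma:binomial4}) are harmless and no worse than the corresponding omissions in the paper.
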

\begin{proof}
	We may assume that $n \ge 4$. Recall that $\tilde{\sigma}_{n,r} = \sigma_{n,r} + \frac{1}{r+1}\alpha_{n,r}$. It follows from~\cite[Lemma 19]{BansalPendavinghVanderpol2014} that $\sigma_{n,r}\binom{n}{r} \le \frac{8 \ln n}{n^2} \binom{n}{\lfloor n/2\rfloor}$. It is easily verified that $\frac{1}{r+1}\alpha_{n,r} \binom{n}{r}$ is increasing in $r=0,1,\ldots,\lfloor n/2\rfloor$, so
	\begin{equation*}
		\frac{1}{r+1}\alpha_{n,r} \binom{n}{r} \le \frac{4}{n^2} \binom{n}{\lfloor n/2\rfloor} \le \frac{3 \ln n}{n^2} \binom{n}{\lfloor n/2\rfloor}.
	\end{equation*}
	Combining these bounds, we obtain $\tilde\sigma_{n,r} \binom{n}{r} \le \frac{11 \ln n}{n^2} \binom{n}{\lfloor n/2\rfloor}$. An application of~\eqref{eq:binomial_sum} gives
	\begin{equation*}
		\max_{0 \le r \le n/2} \sum_{k=0}^{2\lceil \tilde\sigma_{n,r} N\rceil} \binom{2^n(n+1)}{k} \le \exp_2\left(\frac{22 \ln n}{n^2}\binom{n}{\lfloor n/2\rfloor} \log \left(\frac{\e 2^n (n+1)}{\frac{22 \ln n}{n^2} \binom{n}{\lfloor n/2\rfloor}}\right)(1+o(1))\right).
	\end{equation*}
	The lemma follows as the expression in the logarithm is $n^{O(1)}$ by~\eqref{eq:binomial_central}.
\end{proof}

%%% BEGIN IGNORE
\ignore{The following lemma bounds $\tilde\sigma N$ uniformly in $r$.
\begin{lemma}\label{lemma:sigma_bound_old}
	If $n \ge 4$ and $0 \le r \le n/2$, then $\lceil \tilde\sigma N\rceil \le \frac{11 \ln n}{n^2} \binom{n}{\lfloor n/2\rfloor}$.
\end{lemma}
\begin{proof}
	Recall that $\tilde\sigma_{n,r} \binom{n}{r} = \left(\sigma_{n,r} + \frac{1}{r+1}\alpha_{n,r}\right)\binom{n}{r}$. It follows from~\cite[Lemma 19]{BansalPendavinghVanderpol2014} that $\sigma_{n,r} \le \frac{8 \ln n}{n^2} \binom{n}{\lfloor n/2\rfloor}$. It is easily verified that $\frac{1}{r+1}\alpha_{n,r} \binom{n}{r}$ is increasing in $r=0,1,\ldots,\lfloor n/2\rfloor$, so
	\begin{equation*}
		\frac{1}{r+1}\alpha_{n,r} \binom{n}{r} \le \frac{4}{n^2} \binom{n}{\lfloor n/2\rfloor} \le \frac{3 \ln n}{n^2} \binom{n}{\lfloor n/2\rfloor},
	\end{equation*}
	from which the claim follows.
\end{proof}
}      %%% END IGNORE

\begin{theorem}
	$\log m'_n \le (1+o(1))\log s_n$ as $n \to \infty$.
\end{theorem}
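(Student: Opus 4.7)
The plan is to aggregate the rank-by-rank bound of Lemma~\ref{lemma:mnr_bound} using the uniform tail estimate of Lemma~\ref{lemma:sigma_bound}, and then absorb the overhead into $\log s_n$ via the Graham–Sloane lower bound~\eqref{eq:Johnson_stable2}.

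First I would handle the rank restriction. Lemma~\ref{lemma:mnr_bound} is only stated for $r \le n/2$, but matroid duality gives a bijection between loop- and coloop-free matroids of rank $r$ and those of rank $n-r$ on $[n]$, so $m'_{n,r} = m'_{n,n-r}$. Writing $m'_n = \sum_{r=0}^{n} m'_{n,r}$, only the ranks $r \le n/2$ need to be bounded directly, each counted with a factor of at most $2$.

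For any such $r$, combining Lemma~\ref{lemma:mnr_bound} with Lemma~\ref{lemma:sigma_bound} and the trivial bound $s_{n,r} \le s_n$ yields
\begin{equation*}
    m'_{n,r} \le s_n \cdot \exp_2\!\left(c\,\frac{\log^2 n}{n^2}\binom{n}{\lfloor n/2\rfloor}\right).
\end{equation*}
Summing over the $n+1$ possible ranks gives
\begin{equation*}
    m'_n \le 2(n+1)\, s_n \cdot \exp_2\!\left(c\,\frac{\log^2 n}{n^2}\binom{n}{\lfloor n/2\rfloor}\right),
\end{equation*}
so taking logarithms,
\begin{equation*}
    \log m'_n \le \log s_n + O\!\left(\frac{\log^2 n}{n^2}\binom{n}{\lfloor n/2\rfloor}\right) + O(\log n).
\end{equation*}

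Finally I would absorb the error into $\log s_n$. The Graham–Sloane lower bound~\eqref{eq:Johnson_stable2} together with Lemma~\ref{prop:johnson_sparsepaving} gives $\log s_n \ge \log s_{n,\lfloor n/2\rfloor} \ge \frac{1}{n}\binom{n}{\lfloor n/2\rfloor}$, and thus
\begin{equation*}
    \frac{\log^2 n}{n^2}\binom{n}{\lfloor n/2\rfloor} = O\!\left(\frac{\log^2 n}{n}\right) \cdot \log s_n = o(\log s_n),
\end{equation*}
while $\log n = o(\log s_n)$ is immediate from the same lower bound. This delivers $\log m'_n \le (1+o(1))\log s_n$.

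I do not expect a real obstacle: the architecture has been set up so that all the information-theoretic work is already absorbed into Lemmas~\ref{lemma:mnr_bound} and~\ref{lemma:sigma_bound}. The only thing to watch is that the additive overhead $\frac{\log^2 n}{n^2}\binom{n}{\lfloor n/2\rfloor}$ is genuinely smaller than $\log s_n$, which is ensured by the factor $\log^2 n / n \to 0$; this is exactly the slack the authors bought by replacing the $\alpha\binom{n}{r}$ bits of the previous paper with a stable set encoding.
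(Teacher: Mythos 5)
Your proposal is correct and follows essentially the same route as the paper's own proof: combine Lemma~\ref{lemma:mnr_bound} with Lemma~\ref{lemma:sigma_bound} and $s_{n,r}\le s_n$ for $r\le n/2$, extend to $r>n/2$ by duality, sum over ranks, and absorb the $O\bigl(\frac{\log^2 n}{n^2}\binom{n}{\lfloor n/2\rfloor}\bigr)$ and $O(\log n)$ overheads using the Graham--Sloane lower bound $\log s_n\ge\frac1n\binom{n}{\lfloor n/2\rfloor}$. No gaps.
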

\begin{proof}
	Upon combining the upper bound on $m'_{n,r}$ with the fact that $s_{n,r} \le s_n$, we find by Lemma~\ref{lemma:sigma_bound} that
	\begin{equation*}
		m'_{n,r} \le s_n \exp_2\left(c \frac{\log^2 n}{n^2} \binom{n}{\lfloor n/2\rfloor}\right),
	\end{equation*}
	at least for $r \le n/2$. By duality, this same bound holds for $m'_{n,n-r}$. As $m'_n = \sum_r m'_{n,r}$, we obtain
	\begin{equation*}
		\log m'_n \le \log(s_n) + \log(n+1) + c\frac{\log^2 n}{n^2} \binom{n}{\lfloor n/2\rfloor}.
	\end{equation*}
	The theorem now follows since $\log s_n \ge \frac{1}{n} \binom{n}{\lfloor n/2\rfloor}$.
\end{proof}
%\begin{proof}
%	Assume that $n \ge 4$. Upon combining the upper bound on $m'_{n,r}$ with the fact that $s_{n,r} \le s_n$, and using the bound on $\lceil\tilde\sigma_{n,r} \binom{n}{r}\rceil$ from Lemma~\ref{lemma:sigma_bound}, we obtain
%	\begin{equation*}
%		\max_r m'_{n,r} \le s_n \sum_{k=0}^{2\lceil\sigma_{n,r}\binom{n}{r}\rceil} \binom{2^n(n+1)}{k} = s_n \binom{2^n(n+1)}{\frac{11 \ln n}{n^2} \binom{n}{\lfloor n/2\rfloor}}(1+o(1)),
%	\end{equation*}
%	where the final equality uses~\eqref{eq:binomial2}. Noting that $m'_n \le (n+1)\max_r m'_{n,r}$, it follows that
%	\begin{equation*}
%		\log m'_n \le \log s_n + \log\binom{2^n(n+1)}{\frac{11 \ln n}{n^2} \binom{n}{\lfloor n/2\rfloor}} + \log(n+1) + o(1).
%	\end{equation*}
%	By an application of~\eqref{eq:binomial1} followed by~\eqref{eq:binomial3}, the second term is bounded by
%	\begin{equation*}
%		\log\binom{2^n(n+1)}{\frac{11 \ln n}{n^2} \binom{n}{\lfloor n/2\rfloor}}
%			\le \frac{11 \ln n}{n^2} \binom{n}{\lfloor n/2\rfloor} \log \frac{\e 2^n (n+1)}{\frac{11 \ln n}{n^2} \binom{n}{\lfloor n/2 \rfloor}}
%			= O\left(\frac{\log^2 n}{n^2} \binom{n}{\lfloor n/2 \rfloor}\right).
%	\end{equation*}
%	The theorem now follows, since $\log s_n \ge \frac{1}{n}\binom{n}{\lfloor n/2\rfloor}$.
%\end{proof}
It was shown in~\cite[Theorem 2.3]{MayhewNewmanWelshWhittle2011} that almost every matroid has no loops or coloops, i.e.\ that
\begin{equation}\label{eq:almost-no-loops}
	m'_n = m_n(1-o(1)) \qquad\text{as $n \to \infty$}.
\end{equation}
This immediately implies the following corollary.
\begin{corollary}\label{crl:main_result}
	$\log m_n = (1+o(1)) \log s_n$ as $n \to \infty$.
\end{corollary}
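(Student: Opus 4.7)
The plan is to observe that the corollary follows by sandwiching $\log m_n$ between $\log s_n$ (from below) and $(1+o(1))\log m'_n$ (from above), and then applying the preceding theorem.

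First I would establish the trivial lower bound: every sparse paving matroid is a matroid, so $s_n \le m_n$ and therefore $\log s_n \le \log m_n$. This gives the ``$\ge$'' part of the claimed asymptotic equality at no cost, and it is the easy direction.

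Next I would derive the upper bound. From~\eqref{eq:almost-no-loops} we have $m'_n = m_n(1-o(1))$, which rearranges to $m_n = m'_n/(1-o(1)) = m'_n(1+o(1))$ as $n\to\infty$. Taking logarithms yields
\begin{equation*}
    \log m_n = \log m'_n + \log(1+o(1)) = \log m'_n + o(1).
\end{equation*}
The preceding theorem states $\log m'_n \le (1+o(1))\log s_n$, so combining,
\begin{equation*}
    \log m_n \le (1+o(1))\log s_n + o(1).
\end{equation*}
The additive $o(1)$ term is absorbed into the multiplicative one because $\log s_n$ grows without bound: by~\eqref{eq:Johnson_stable2} together with Lemma~\ref{prop:johnson_sparsepaving}, $\log s_n \ge \log s_{n,\lfloor n/2\rfloor} \ge \frac{1}{n}\binom{n}{\lfloor n/2\rfloor} \to \infty$. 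Hence $\log m_n \le (1+o(1))\log s_n$.

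Combining the two inequalities $\log s_n \le \log m_n \le (1+o(1))\log s_n$ gives $\log m_n = (1+o(1))\log s_n$, as required. There is really no obstacle here; the entire content of the corollary has been done in the preceding theorem, and the only thing to check is that the loss from restricting to loopless and coloopless matroids is negligible on the logarithmic scale, which is exactly what~\eqref{eq:almost-no-loops} provides. The mild subtlety worth spelling out is the absorption of the additive $o(1)$ into the multiplicative factor, which relies on the unboundedness of $\log s_n$.
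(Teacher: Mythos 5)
Your proof is correct and follows exactly the route the paper intends: the paper simply asserts that the theorem $\log m'_n \le (1+o(1))\log s_n$ together with \eqref{eq:almost-no-loops} ``immediately implies'' the corollary, and your write-up supplies precisely the missing details (the trivial lower bound $s_n \le m_n$, the conversion $\log m_n = \log m'_n + o(1)$, and the absorption of the additive $o(1)$ using $\log s_n \to \infty$). Nothing further is needed.
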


\subsection{The rank of a typical matroid}
It was shown in~\cite[Corollary 2.3]{LowranceOxleySempleWelsh2013} that for all $\varepsilon > 0$, the fraction of matroids having rank $r$ in the range $(1/2 - \varepsilon)n < r < (1/2 + \varepsilon)n$ tends to~1. In this section, we prove a slightly stronger statement.
\begin{theorem}\label{thm:rank}
	There exists $\beta_0$ such that for $\beta > \beta_0$ the fraction of matroids with $n/2 - \beta\sqrt{n} < r < n/2 + \beta\sqrt{n}$ tends to 1 as $n \to \infty$.
\end{theorem}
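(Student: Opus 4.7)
The plan is to show that for any $\beta > 1/\sqrt{2}$ the number of rank-$r$ matroids on $[n]$ with $|r-n/2|\ge\beta\sqrt{n}$ is $o(m_n)$, which establishes the theorem with $\beta_0=1/\sqrt{2}$. Since (\ref{eq:almost-no-loops}) already discards matroids with loops or coloops up to a factor $1-o(1)$, it suffices to bound $\sum m'_{n,r}$ over bad ranks and compare with the trivial lower bound $m_n \ge s_n$.

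The first step is to sharpen Lemma~\ref{lemma:mnr_bound} into a pointwise-in-$r$ estimate. Applying Theorem~\ref{thm:stablesets} and (\ref{eq:binomial_sum}) directly to $s_{n,r}=i(J(n,r))$, together with the bounds $\sigma_{n,r}\binom{n}{r} \le (8\ln n/n^2)\binom{n}{\lfloor n/2\rfloor}$ and $\log(\e/\sigma_{n,r})=O(\log n)$ already established in the proof of Lemma~\ref{lemma:sigma_bound}, I obtain $\log s_{n,r} \le \alpha_{n,r}\binom{n}{r} + O\!\left((\log^2 n/n^2)\binom{n}{\lfloor n/2\rfloor}\right)$. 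Combining this with Lemma~\ref{lemma:mnr_bound} and Lemma~\ref{lemma:sigma_bound} yields
$$\log m'_{n,r} \le \alpha_{n,r}\binom{n}{r} + O\!\left(\frac{\log^2 n}{n^2}\binom{n}{\lfloor n/2\rfloor}\right)$$
uniformly in $r\le n/2$.

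The second step is the key estimate. For $r = \lfloor n/2\rfloor - k$ with $k \ge \beta\sqrt{n}$, Lemma~\ref{lemma:binomial4} gives
$$\alpha_{n,r}\binom{n}{r} = \frac{\binom{n}{\lfloor n/2\rfloor - k}}{\lceil n/2\rceil + k + 1} \le \frac{\binom{n}{\lfloor n/2\rfloor}}{\lceil n/2\rceil + k + 1}\cdot\exp_2\!\left(-\frac{k^2}{\lceil n/2\rceil + k}\right).$$
Both factors on the right are decreasing in $k$ (the second because $k^2/(\lceil n/2\rceil+k)$ is increasing in $k$), so the bound is maximised at $k=\lceil\beta\sqrt{n}\rceil$, where it equals $(2/n)\cdot 2^{-2\beta^2}\binom{n}{\lfloor n/2\rfloor}\cdot(1+o(1))$ by (\ref{eq:binomial_central}). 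Using $\log s_n \ge \binom{n}{\lfloor n/2\rfloor}/n$ from (\ref{eq:Johnson_stable2}) then gives
$$\log\frac{m'_{n,r}}{s_n} \le \bigl(2\cdot 2^{-2\beta^2} - 1 + o(1)\bigr)\cdot\frac{1}{n}\binom{n}{\lfloor n/2\rfloor}.$$
For $\beta > 1/\sqrt{2}$ the leading constant is a strictly negative absolute constant, so the right-hand side diverges to $-\infty$ at rate $\Theta(2^n/n^{3/2})$.

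To finish, sum over the at most $n+1$ bad ranks with $r \le n/2 - \beta\sqrt{n}$; the additional $\log(n+1)$ is dwarfed by the exponential decay. The symmetric range $r \ge n/2 + \beta\sqrt{n}$ is handled by duality, since $m'_{n,r}=m'_{n,n-r}$. Combining with $m_n \ge s_n$ and (\ref{eq:almost-no-loops}) yields the theorem. The main obstacle is bookkeeping: one must verify that the $O((\log^2 n/n^2)\binom{n}{\lfloor n/2\rfloor})$ error inherited from the matroid encoding and the additive $\log(n+1)$ from summing over $r$ are of strictly lower order than the negative main term $(2\cdot 2^{-2\beta^2} - 1)(1/n)\binom{n}{\lfloor n/2\rfloor}$, which is routine given (\ref{eq:binomial_central}).
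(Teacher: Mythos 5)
Your proposal is correct and follows essentially the same route as the paper's own proof: bound $m'_{n,r}$ via Lemma~\ref{lemma:mnr_bound}, Theorem~\ref{thm:stablesets} and Lemma~\ref{lemma:sigma_bound}, use Lemma~\ref{lemma:binomial4} to show the dominant term $\alpha_{n,r}\binom{n}{r}$ falls exponentially below the lower bound $\frac{1}{n}\binom{n}{\lfloor n/2\rfloor}\le\log s_n$ once $|r-n/2|\ge\beta\sqrt{n}$, then sum over ranks, invoke duality, and dispose of loops and coloops via~\eqref{eq:almost-no-loops}. The only addition is your explicit value $\beta_0=1/\sqrt{2}$, which is consistent with the base-$2$ exponential in Lemma~\ref{lemma:binomial4} and the condition $2\cdot 2^{-2\beta^2}<1$, where the paper settles for ``sufficiently large $\beta$''.
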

\begin{proof}
	By duality, it suffices to show that $r(M) \le n/2 - \beta\sqrt{n}$ for a vanishing fraction of matroids. In view of~\eqref{eq:almost-no-loops}, it suffices to restrict our attention to matroids without loops or coloops. In fact, we will show that for $\beta$ sufficiently large
	\begin{equation*}
		\frac{1}{m'_n} \sum_{r=0}^{\lfloor n/2 - \beta\sqrt{n}\rfloor} m'_{n,r} \to 0\qquad\text{as $n \to \infty$}.
	\end{equation*}
	The term $m'_{n,r}$ can be bounded by combining Lemma~\ref{lemma:mnr_bound} with the upper bound on $s_{n,r} = i(J(n,r))$ provided by Theorem~\ref{thm:stablesets}. As $m'_n \ge s_{n,\lfloor n/2\rfloor} \ge 2^{\frac{1}{n}\binom{n}{\lfloor n/2\rfloor}}$, it follows that
	\begin{equation*}
		\begin{split}
			\frac{1}{m'_n} \sum_{r=0}^{\lfloor n/2 - \beta\sqrt{n}\rfloor} m'_{n,r}
				&\le 2^{-\frac{1}{n}\binom{n}{\lfloor n/2\rfloor}} \sum_{r=0}^{\lfloor n/2 - \beta\sqrt{n}\rfloor}
					\left[\sum_{s=0}^{\lceil \sigma_{n,r} \binom{n}{r}\rceil} \binom{\binom{n}{r}}{s}
					2^{\alpha_{n,r} \binom{n}{r}}
					\sum_{k=0}^{2\lceil \tilde\sigma_{n,r} \binom{n}{r}\rceil} \binom{2^n(n+1)}{k}\right] \\
				&\le 2^{-\frac{1}{n}\binom{n}{\lfloor n/2\rfloor}}
					\sum_{r=0}^{\lfloor n/2 - \beta\sqrt{n}\rfloor} \left[ 2^{\alpha_{n,r} \binom{n}{r}}
					\left(\sum_{k=0}^{2\lceil \tilde\sigma_{n,r} \binom{n}{r}\rceil} \binom{2^n(n+1)}{k}\right)^2 \right],
		\end{split}
	\end{equation*}
	which, by Lemma~\ref{lemma:sigma_bound} and the inequality $\alpha_{n,r} \le \frac{2}{n}$ is at most
	\begin{multline*}
		2^{-\frac{1}{n}\binom{n}{\lfloor n/2\rfloor}} \sum_{r=0}^{\lfloor n/2 -\beta\sqrt{n}\rfloor} \exp_2\left(\frac{2}{n}\binom{n}{r} + 2c\frac{\log^2 n}{n^2}\binom{n}{\lfloor n/2\rfloor}\right) \\
			\le n \exp_2\left(-\frac{1}{n} \binom{n}{\lfloor n/2\rfloor} (1-o(1)) + \frac{2}{n}\binom{n}{\lfloor n/2 - \beta\sqrt{n}\rfloor}\right) \\
			= n\exp_2\left(-\frac{1}{n}\binom{n}{\lfloor n/2\rfloor} \left(1 - 2\e^{-2\beta^2})(1-o(1))\right)\right),
	\end{multline*}
	where the final equality follows from Lemma~\ref{lemma:binomial4} with $k = \beta\sqrt{n}$. For sufficiently large $\beta$, the right-hand side tends to 0, thus concluding the proof.
\end{proof}
%\begin{remark}
	We like to remark at this point that Theorem~\ref{thm:rank} can be proved using the bound on $m_{n,r}$ that was derived in the proof of~\cite[Theorem 3]{BansalPendavinghVanderpol2014}. But since that paper does not have a separate lemma which we can refer to here, we make use of Lemma~\ref{lemma:mnr_bound} of the present paper.
%\end{remark}

\section{Some remaining problems}
\subsection{Counting the matroids without circuit-hyperplanes}
It was conjectured in ~\cite[Conjecture 22]{BansalPendavinghVanderpol2014} that
\begin{equation}\lim_{n\rightarrow\infty} \frac{\#\{M\in \MM_n: M\text{ has no circuit-hyperplanes }\}}{m_{n}}=0\end{equation}
We have tried to prove this conjecture by analysing the behaviour of (variants of) our compression algorithm on matroids without circuit-hyperplanes, so far without any success. We like to encourage the reader to give it another try, since it does feel as if we are very close.  The key to proving a sufficient bound seems to be the behaviour of the algorithm when picking $T\subseteq A$. A more intelligent decoder may be able to reconstruct the matroid from a much sparser set $T$. Note that the asymptotic upper bounds on $|S|$ and $|\ZZ|$ do not get worse essentially if we insist that the algorithm continues its main loop while $\Delta(G[A])\geq \epsilon r$, for any fixed $\epsilon>0$. 

\subsection{The rank of a typical matroid}
It was conjectured by Mayhew, Newman, Welsh, and Whittle in~\cite[Conjecture 1.10]{MayhewNewmanWelshWhittle2011} that asymptotically almost all matroids have rank between $(n-1)/2$ and $(n+1)/2$. By Theorem~\ref{thm:rank}, almost all matroids on $n$ elements  have a rank between $n/2-\beta\sqrt{n}$ and $n/2-\beta\sqrt{n}$. At the heart of the argument lies the fact that for sufficiently large $k$ the ratio $m_{n, \lfloor n/2\rfloor-k}/m_{n,\lfloor n/2\rfloor}$ tends to 0, using that
\begin{equation*}
	\frac{m_{n, \lfloor n/2\rfloor-k}}{m_{n,\lfloor n/2\rfloor}}\leq \frac{s_{n, \lfloor n/2\rfloor-k}}{s_{n,\lfloor n/2\rfloor}} \cdot 2^{O\left(\frac{\log^2 n}{n^2}\binom{n}{\lfloor n/2\rfloor -k}\right)}%2^{ O( \binom{n}{n/2-k}\log(n)/n^2 )}
\end{equation*}
We see no better way to bound the factor $s_{n, n/2-k} /s_{n,n/2}$ than by combining the lower bound of Graham and Sloane and the upper bound from ~\cite{BansalPendavinghVanderpol2014}, and we ask if a direct comparison would perhaps be possible. It would be a good start if we could argue that asymptotically all {\em sparse paving} matroids on $n$ elements have a rank between $(n-1)/2$ and $(n+1)/2$. Presently we cannot even prove the unimodality of $s_{n,r}$, i.e. that $s_{n,r}<s_{n,r'}$ for all $0<r<r'\leq n/2$. 
%Intuitively, for large $n$ and $r\approx n/2$, the graphs $J(n,r)$ and $J(n,r-1)$ are very `similar' and have a similar collection of stable sets, but we do not see any way to formalize this. 

\subsection{Comparing $\log s_{n,r}$ and $\log m_{n,r}$ for small $r$}

By Theorem~\ref{thm:rank}, most matroids have rank close to $n/2$. Consequently, in the derivation of our main result, Corollary~\ref{crl:main_result}, we are mainly interested in comparing $m_{n,r}$ to $s_{n,r}$ for $r \approx n/2$. In this regime, the upper bound
\begin{equation*}
	\log m_{n,r}
		\le \log s_{n,r}+ \log \sum_{k=0}^{2\lceil \tilde\sigma_{n,r} N\rceil} \binom{2^n(n+1)}{k}
		\le \log s_{n,r}+ 2\left\lceil\tilde{\sigma}_{n,r} \binom{n}{r}\right\rceil \log\left(\frac{\e 2^n (n+1)^3}{4\binom{n}{r}}\right)
\end{equation*}
from Lemma~\ref{lemma:mnr_bound} combined with~\eqref{eq:mnr_bound_nonuniform} suffices to show that $(\log s_{n,r})/(\log m_{n,r})\rightarrow 1$ as $n\rightarrow \infty$.

On the other hand, when $r$ (or by duality $n-r$) is small, this upper bound will not suffice, as illustrated by the following result.

\begin{lemma}
	There exists $c > 0$ such that $\log \sum_{k=0}^{2\lceil \tilde\sigma_{n,r} N\rceil} \binom{2^n(n+1)}{k} \ge \binom{n}{r}$ for all $r \le c \log n$ and $n$ sufficiently large.
\end{lemma}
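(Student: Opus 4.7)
The plan is to lower-bound the sum by its single term at $k = K$ and apply the elementary inequality $\binom{M}{k}\ge (M/k)^k$. Write $M := 2^n(n+1)$ and $K := 2\lceil \tilde\sigma_{n,r}\binom{n}{r}\rceil$; then
\[
\log \sum_{k=0}^{K}\binom{M}{k}\ \ge\ \log\binom{M}{K}\ \ge\ K\log(M/K),
\]
and the task becomes showing that the right-hand side is at least $\binom{n}{r}$ whenever $r\le c\log n$, for a suitably chosen constant $c>0$.

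First, I would verify that $\log(M/K) = n(1-o(1))$ in this regime. Since $r\le c\log n$, one has $\binom{n}{r}\le n^r \le 2^{c\log^2 n}$, so $K \le 2\binom{n}{r}\le 2^{O(\log^2 n)}$ and hence $\log K = o(n)$. Therefore $\log(M/K)\ge n-\log K = n(1-o(1))$.

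Next I would lower-bound $K$ itself. Discarding the $\alpha_{n,r}/(r+1)$ contribution, $\sigma_{n,r}$ already suffices: for $1\le r\le n/2$ we have $r(n-r)\ge n-1$, so $\ln(r(n-r)+1)\ge \ln n$ for $n$ large, while $r(n-r+1)\le rn$. Thus
\[
K\ \ge\ 2\sigma_{n,r}\binom{n}{r}\ \ge\ \frac{2\ln n}{rn}\binom{n}{r}.
\]
Combining with the first estimate yields
\[
\log\binom{M}{K}\ \ge\ Kn(1-o(1))\ \ge\ \frac{2\ln n}{r}\binom{n}{r}(1-o(1)),
\]
which exceeds $\binom{n}{r}$ as soon as $r\le c\log n$ with $c$ small enough that $(2\ln n)/r \ge 1/(1-o(1))$ holds uniformly.

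I do not foresee a real obstacle here; the argument is a direct calculation. The only point requiring care is calibrating constants so that the natural logarithm arising inside $\sigma_{n,r}$ matches the (base-$2$) logarithm in the hypothesis $r\le c\log n$, which is achieved by choosing $c$ sufficiently small. The boundary values $r\in\{0,1\}$, if desired, can be verified directly.
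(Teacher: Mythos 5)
Your proposal is correct and follows essentially the same route as the paper's own argument: bound the sum from below by its top term, apply $\binom{M}{K}\ge (M/K)^K$, lower-bound $\tilde\sigma_{n,r}\binom{n}{r}$ via $\sigma_{n,r}\ge \ln n/(rn)$, and observe that $\log(M/K)=n(1-o(1))$ because $\binom{n}{r}\le n^{r}$ is subexponential when $r\le c\log n$. The only cosmetic difference is that the paper controls the term inside the logarithm with the entropy bound $\binom{n}{r}\le 2^{nH(r/n)}$ rather than $n^{r}$; your calibration of $c$ against the base-$2$ versus natural logarithm is handled correctly.
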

% BEGIN IGNORED PROOF
\ignore{
\begin{proof}
Throughout the argument, we will assume that $n$ is sufficiently large to support our statements. For convenience, we will use the bounds $\frac{\ln(r(n-r))}{r(n-r+1)} \le \tilde\sigma_{n,r} \le \frac{2\ln(r(n-r))}{r(n-r+1)}$. We bound the sum of binomial coefficient by its largest term, and an application of~\eqref{eq:binomial1} to obtain
\begin{equation*}
	\log \sum_{k=0}^{2\lceil \tilde\sigma_{n,r} N\rceil} \binom{2^n (n+1)}{k}
		\ge \log \left(\frac{2^n(n+1)}{2\lceil \tilde\sigma_{n,r} \binom{n}{r}\rceil}\right)^{2\lceil \tilde\sigma_{n,r} N\rceil}
		\ge 2\frac{\ln(r(n-r))}{r(n-r+1)} N \log \left(\frac{2^n(n+1)}{8 \frac{\ln(r(n-r))}{r(n-r+1)} \binom{n}{r}}\right).
\end{equation*}
In order to obtain a lower bound, we bound the binomial coefficient inside the logarithm by
\begin{equation}\label{eq:binomial_entropy}
	\binom{n}{r} \le 2^{nH(r/n)} \le 2^{2\frac{r}{n}\log n}, \qquad 1 \le r \le n/2,
\end{equation}
where $H(p) = -p \log p - (1-p) \log (1-p)$ is the binary entropy function. Using~\eqref{eq:binomial_entropy},
\begin{equation*}
	\frac{2^n (n+1)}{4\tilde\sigma_{n,r} \binom{n}{r}} \ge 2^{n\left(1-2\frac{r}{n}\log n\right)}.
\end{equation*}
Hence,
\begin{multline*}
	2\frac{\ln(r(n-r))}{r(n-r+1)} N \log \left(\frac{2^n(n+1)}{8 \frac{\ln(r(n-r))}{r(n-r+1)} \binom{n}{r}}\right) \\
		\ge 2\frac{\ln(r(n-r)+1)+1}{r(n-r+1)} n\left(1 - \frac{r}{n}\log n\right) \binom{n}{r}
		\ge \frac{\ln n}{r} \binom{n}{r}
\end{multline*}
when $r \le c \log n$. The claim follows.
\end{proof}
}
% END IGNORED PROOF
We note at this point that the best asymptotic lower bound on $s_{n,r}$ for fixed $r$ is strictly better than the general bound of $\log s_{n,r}\geq \alpha(J(n,r))\geq \binom{n}{r}/n$. The better bound follows from the work of Keevash~\cite[Thm 6.8]{Keevash2014} who has recently proved an asymptotic estimate of the number of Steiner systems with parameters $(n,r,q)$, where~$r$ and~$q$ are fixed:
\begin{equation*}
	\log S(n,r,q)=(1+o(1))\binom{q}{r}^{-1}\binom{n}{r}(q-r)\log n\qquad\text{ as }n\rightarrow \infty.
\end{equation*}  
Since each Steiner system with parameters $(n, r-1, r)$ uniquely determines a stable set of the Johnson graph $J(n,r)$, Keevash's estimate gives an asymptotic lower bound of 
\begin{equation}\label{eq:lowerbound_keevash}
	\log s_{n,r}\geq \log S(n,r-1,r)= (1+o(1))r^{-1}\binom{n}{r-1}\log n\qquad\text{ as }n\rightarrow \infty.
\end{equation}
The lower bound~\eqref{eq:lowerbound_keevash} is tight up to the $(1+o(1))$-factor. This follows from the trivial upper bound
\begin{equation*}
	\log s_{n,r} = \log i(J(n,r)) \le \log \sum_{k=0}^{\alpha_{n,r}\binom{n}{r}} \binom{\binom{n}{r}}{k} = (1+o(1))\alpha_{n,r} \binom{n}{r} \log n \qquad\text{ as }n\rightarrow \infty
\end{equation*}
and the identity $r^{-1} \binom{n}{r-1} = \alpha_{n,r}\binom{n}{r}$.

%At this level of precision this lower bound is also an upper bound, since $s_{n,r}$ is at most the number of subsets of vertices from $J(n,r)$ whose cardinality is bounded by $\alpha_{n,r}\binom{n}{r}$, the Hoffmann bound on the cardinality of a stable set in $J(n,r)$.
We speculate that for fixed $r$, the ratio $s_{n,r}/m_{n,r}$ tends to 0  as $n\rightarrow \infty$, while at the same time, $(\log s_{n,r})/(\log m_{n,r})$ tends to 1.

\subsection{Comparing $s_n$ and $m_n$}
\ignore{ %%% BEGIN IGNORE
The matroid compression procedure described in this paper returns a pair $(U, \ZZ)$ where $U$ is a stable set and $\ZZ$ is a partial flat cover. The performance of this algorithm may be better than what we have established by proving upper bounds on the cardinality of $\ZZ$. Indeed, if the compression algorithm is given a sparse paving matroid $M$ for its input then the description $(U,\ZZ)$ will consist of the set of circuit-hyperplanes $U$ of $M$ and an empty set $\ZZ$. So if almost all matroids are sparse paving, the average cardinality of $\ZZ$ over all $M\in\MM_{n,r}$ may even tend to 0.

To prove sharper bounds on the ratio $s_n/m_n$ using this compression procedure, it seems necessary to prove bounds on the expected size of $\ZZ$, or rather the expected value of $2^{|\ZZ|}$, when the input to the procedure is a uniformly random matroid $M \in \MM_{n,r}$.
%To prove sharper bounds on the ratio $s_n/m_n$ using this compression algorithm, it would seem necessary to prove bounds on the expected size of $\ZZ$, or rather the expectation of $\exp(|\ZZ|)$. 
We presently do not see a way to analyse this expected behaviour.

For the most detailed results, it may even be necessary to analyse the relation between $|U|$ and $|\ZZ|$. What we mean by this is something in the following vein. Suppose that a matroid $M$ is encoded as $(U, \ZZ)$. Clearly, we can remove from $U$ all the non-bases that are already covered by some flat-rank pair in $\mathcal{Z}$, which allows us to speak of \emph{minimal} descriptions corresponding with $\ZZ$. If $\ZZ$ is known to be large, it is conceivable that the number of such minimal descriptions is much smaller than the number of stable sets in the Johnson graph.

%For the most detailed results may even be necessary to demonstrate that the cardinality of $U$ decreases as $|\ZZ|$ increases. Consider for example the matroids $M=(E,\BB)\in \MM_{n,r}$ such that each component of $J(n,r)\setminus\BB$ is an isolated vertex, except that exactly one component is a clique of the form $\{X\in \binom{[n]}{r}: X\supseteq Y\}$ for some $Y\in\binom{E}{r-1}$. If asymptotically all matroids are sparse paving, then the number of such matroids compared to $s_{n,r}$ must tend to zero as $n$ tends to infinity (and say $r\approx n/2$).

} %%% END IGNORE

The matroid encoding procedure described in this paper returns a pair~$(U,\ZZ)$, where~$U$ is a stable set and~$\ZZ$ is a partial flat cover. Note that we could have been more economical in constructing~$\ZZ$. For example, when the encoding procedure is run on a sparse paving matroid, that matroid can be reconstructed from $U$ alone. This suggests that in many cases~$\ZZ$ can be pruned, while the original matroid can still be recovered.

The following information-theoretic perspective may be useful.
Suppose that~$(U,\ZZ)$ is chosen uniformly at random from the set of all possible outputs of the encoding procedure when its input is some matroid in $\MM_{n,r}$ without loops or coloops.
By the chain rule for the entropy function $\HH{\cdot}$,
\begin{equation*}
	\log m'_{n,r} = \HH{U,\ZZ} = \HH{U} + \HH{\ZZ \mid U}.
\end{equation*}
The term $\HH{U}$ is at most $\log s_{n,r}$. Currently, we naievely bound $\HH{\ZZ \mid U}$ by the logarithm of the number of possible partial flat covers, which does not take into account the mutual information between~$U$ and~$\ZZ$. We expect that $\HH{\ZZ \mid U}$ is much smaller than our naieve bound, but presently do not see a more careful analysis of this quantity.

\bibliographystyle{alpha}
\bibliography{bib}
\end{document}